\documentclass[11pt,a4paper]{article}
\usepackage[T1]{fontenc}
\usepackage{lmodern}
\usepackage{amsmath,amssymb,amsthm,mathtools}
\usepackage{xcolor}
\usepackage{geometry}
\usepackage{microtype}
\usepackage{enumitem}
\usepackage{hyperref}
\usepackage{bm}
\usepackage{mathrsfs}
\hypersetup{colorlinks=true,linkcolor=blue,citecolor=blue,urlcolor=blue}
\allowdisplaybreaks

\newtheorem{theorem}{Theorem}[section]
\newtheorem{lemma}[theorem]{Lemma}
\newtheorem{proposition}[theorem]{Proposition}
\newtheorem{corollary}[theorem]{Corollary}
\newtheorem{principle}[theorem]{Principle}
\theoremstyle{remark}
\newtheorem{remark}[theorem]{Remark}

\newcommand{\norm}[1]{\left\lVert #1\right\rVert}
\newcommand{\abs}[1]{\left|#1\right|}
\newcommand{\one}{\mathbf 1}
\newcommand{\Ave}{\operatorname{Ave}}
\newcommand{\supp}{\operatorname{supp}}
\newcommand{\sgn}{\operatorname{sgn}}
\newcommand{\hooknot}{\mathrel{\not\hookrightarrow}}

\title{\textbf{Nonembedding of Lorentz sequence spaces into Lorentz function spaces}}
\author{Junxiang Qi\textsuperscript{1},\; Qi Liu\textsuperscript{1,*}\\[0.4em]
\small \textsuperscript{1}School of Mathematics and Statistics, Anqing Normal University, Anqing 246133, P. R. China\\
\small Emails: y25060009@stu.aqnu.edu.cn, liuq67@aqnu.edu.cn}
\date{}

\begin{document}
\maketitle

\begin{abstract}
{Let $0<p, q<\infty$ with $p \neq q$, and suppose that $p<2$ or $q \leq 1$. We prove that the Lorentz sequence space $\ell_ {p, q}$ does not isomorphically embed into $L_ {p, q}(0,1)$. Consequently, $L_ {p, q}(0, \infty)$ does not isomorphically embed into $L_ {p, q}(0,1)$.
}
\end{abstract}

\noindent\textbf{Keywords.} $L_{p,q}$-spaces; quasi-Banach spaces; Isomorphic embedding

\noindent\textbf{2020 Mathematics Subject Classification.} 46E30

\section{Introduction}
Lorentz spaces $L_{p,q}$, originating in the work of Lorentz \cite{Lorentz1950,Lorentz1951}, have become an important class of function spaces with substantial applications in harmonic analysis, interpolation theory, and related areas; see, for instance, \cite{KozonoYamazaki,NursultanovTikhonov,Billingsley,Dilworth} and the references therein. Let $(\Omega,\Sigma,\mu)$ be a measure space. For parameters $0<p<\infty$ and $0<q<\infty$, the Lorentz space $L_{p,q}(\Omega)$ consists of all measurable functions $f$ on $\Omega$ for which
\[
 \norm{f}_{p,q}=\left(\int_0^\infty [t^{1/p}f^*(t)]^q\,\frac{dt}{t}\right)^{1/q},
\]
where $f^*$ denotes the nonincreasing rearrangement of $|f|$. Given parameters $0<p<\infty$ and $0<q\le\infty$, an identical rearrangement-based construction yields two distinct families of spaces: the discrete sequence space $\ell_{p,q}$ associated with counting measure, alongside the continuous function spaces $L_{p,q}(0,1)$ and $L_{p,q}(0,\infty)$ equipped with Lebesgue measure. This formal similarity naturally leads to the structural question of determining which of these spaces can be isomorphically embedded into one another. Embedding problems also have applications in other branches of mathematics, but their meanings differ across different branches of mathematics; see the relevant literature \cite{SickelTriebel1995,SeegerTrebels2019,SchoelppleSteinwart2026}.

Prior literature has resolved this embedding question for the locally convex parameter region $1<p<\infty$, $1\le q<\infty$, $p\neq q$. Kuryakov and Sukochev demonstrated that $\ell_{p,q}$ cannot be isomorphically embedded into $L_{p,q}(0,1)$, leaving only the borderline case $p=2$, $1\leq q<2$ unaddressed at that time \cite{KuryakovSukochev}. Their analysis further established, under the same parameter restrictions, that $L_{p,q}(0,\infty)$ does not embed isomorphically into $L_{p,q}(0,1)\oplus\ell_{p,q}$ within the same parameter bounds. This lingering critical parameter regime $p=2$, $1\le q<2$ was fully resolved in \cite{SadovskayaSukochev}. Subsequently, Huang and Sukochev \cite{HuangSukochev} extended the isomorphic classification of Lorentz spaces $L_{p,q}$, previously established for resonant measure spaces, to general $\sigma$-finite measure spaces for $1<p<\infty$, $1\le q<\infty$, and $p\neq q$. More recently, Huang et al. \cite{HuangEtAl} generalized the corresponding nonembedding theorem to Lorentz spaces $L_{p,q}(\mathcal M,\tau)$ associated with noncommutative probability spaces.

All aforementioned classification results are confined to locally convex parameter regimes. The principal new contribution of the present work is to cross the local-convexity threshold and establish a unified nonembedding theorem on the parameter region
\[
\{(p,q):0<p,q<\infty,\ p\neq q,\ p<2\ 	or \ q\le1\}.
\]
Below the quadratic threshold, the decisive common-profile estimate depends only on $p<2$, whereas the splitting and disjoint-sequence ingredients remain available for all $0<p,q<\infty$. Thus the entire subcritical range $0<p<2$, $p\neq q$, is handled by one mechanism, including the genuinely nonlocally convex cases in which $p<1$ or $q<1$.

For $0<q\le1$ we then treat the complementary range $p\ge2$. The subcritical argument cannot simply be continued across $p=2$, since its strict $o(n^{1/p})$ common-profile estimate disappears at the quadratic threshold. When $p>2$, a different two-branch argument based on the same splitting principle, the finite-measure inclusion $L_{p,q}(0,1)\hookrightarrow L_2(0,1)$, and $q$-subadditivity yields the desired nonembedding. At $p=2$, the unweighted common-profile estimate is critical rather than strict; the missing separation is recovered by testing the image sequence with the weights $k^{-1/2}$. Their $\ell_{2,q}$ norm grows like a logarithmic power, while a family dominated by one rearrangement profile has strictly smaller weighted Rademacher growth. This complementary analysis is given in Section~6.

The quasi-Banach setting has received considerable attention in the study of function and sequence spaces. Sukochev and Xu investigated embedding problems for noncommutative $L_p$-spaces in the range $0<p<1$, showing that several non-embedding phenomena persist beyond the Banach setting. In the framework of rearrangement-invariant spaces, Kami\'nska and Raynaud studied copies of $\ell_p$ and $c_0$ in quasi-normed Orlicz--Lorentz sequence spaces and later developed a more general theory for symmetrized quasi-Banach lattices, including classical Lorentz spaces $L_{p,q}$ as important examples. These results provide a natural background for studying embedding and isomorphic classification problems for Lorentz spaces in the nonlocally convex range \cite{KaminskaRaynaud2007,SukochevXu,KaminskaRaynaud2009}. For further results on Lorentz spaces in the quasi-Banach setting, we refer the reader to the relevant literature \cite{NekvindaPesa,MusilovaEtAl,Grafakos}.

The main difference between the present proof and the earlier arguments is that the latter are essentially Banach-space methods. 
Kuryakov and Sukochev~\cite{KuryakovSukochev} established the corresponding nonembedding results in the Banach setting by combining a weakly-null subsequence splitting argument with unconditionality and standard Banach-space estimates. Their approach, however, relies essentially on local convexity and therefore does not extend directly to the genuinely quasi-Banach cases considered here.
The endpoint $p=2$, $1\le q<2$ already required a different method: Sadovskaya and Sukochev~\cite{SadovskayaSukochev} introduced the weighted Rademacher operator with coefficients $k^{-1/2}$ and used interpolation between $L_{2,1}$ and $L_2$ to obtain a logarithmic contradiction. This argument is tied to the Banach range $q\ge1$ and therefore does not cover $0<q<1$. Later works enlarged the classification to general $\sigma$-finite measure spaces~\cite{HuangSukochev} and to noncommutative probability spaces~\cite{HuangEtAl}, but their main nonembedding theorems still remain in the locally convex region $1<p<\infty$, $1\le q<\infty$. The present manuscript instead develops a mechanism that survives beyond local convexity: Aoki--Rolewicz renorming replaces the reverse triangle inequality, a quasi-Banach subsequence splitting principle separates the image sequence into a common-profile part, a disjoint part, and a norm-null error, and for $p<2$ the common-profile part satisfies the strict estimate $o(n^{1/p})$. This strict growth separation is the key new ingredient. For $p>2$, $0<q\le1$, the proof uses the finite-measure inclusion into $L_2$ together with $q$-subadditivity, while the critical case $p=2$ is recovered through the weights $k^{-1/2}$ by a direct quasi-Banach logarithmic estimate. Thus the novelty lies not merely in a larger parameter range, but in replacing Banach-space arguments that fail without local convexity by a proof scheme specifically adapted to quasi-Banach Lorentz spaces.

Our unified main result is as follows.
\begin{theorem}\label{thm:main}
Let $0<p,q<\infty$ with $p\neq q$. If
\[
0<p<2\qquad\text{or}\qquad 0<q\le1,
\]
then there is no linear isomorphic embedding
\[
T:\ell_{p,q}\longrightarrow L_{p,q}(0,1).
\]
\end{theorem}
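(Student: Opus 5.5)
We argue by contradiction. Suppose $T:\ell_{p,q}\to L_{p,q}(0,1)$ is an isomorphic embedding. By the Aoki--Rolewicz theorem we fix an equivalent $r$-norm on $L_{p,q}(0,1)$ with $0<r\le\min(1,q)$, and we put $f_k=Te_k$, where $(e_k)$ is the unit vector basis. Then $\norm{\sum_{k\le n}a_kf_k}_{p,q}\asymp\norm{(a_k)_{k\le n}}_{\ell_{p,q}}$. In particular, for every choice of signs $\varepsilon_k=\pm1$ we have $\norm{\sum_{k\le n}\varepsilon_kf_k}_{p,q}\asymp n^{1/p}$, and $\norm{\sum_{k\le n}\varepsilon_k k^{-1/p}f_k}_{p,q}\asymp\norm{(k^{-1/p})_{k\le n}}_{\ell_{p,q}}\asymp(\log n)^{1/q}$.

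\textbf{Step 1: splitting.} We prove a subsequence splitting principle. Since $(e_k)$ is weakly null-like in $\ell_{p,q}$ (its span contains no $\ell_1$-type spreading behaviour incompatible with the limit), the images can be normalized, passing to differences $f_{2k}-f_{2k-1}$ if necessary. After passing to a subsequence, we write
\[
f_k=g_k+h_k+e'_k,
\]
where $(g_k)$ is equi-integrable in the $L_{p,q}$ sense and dominated by a common profile, i.e.\ $g_k^*\le G$ for a fixed $G\in L_{p,q}(0,1)$ with uniformly small tails. The functions $h_k$ are pairwise disjointly supported, with supports of measure tending to $0$, and $\sum\norm{e'_k}^r<\varepsilon$. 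This follows the Kadec--Pełczyński method, using rearrangements and the $r$-norm in place of the triangle inequality. Disjointness gives $\norm{\sum_k a_kh_k}_{p,q}\le C\norm{(a_k)}$ in the corresponding disjoint Lorentz-type space.

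\textbf{Step 2: the case $p<2$.} Averaging over signs, Khintchine's inequality applied pointwise together with the lattice property gives
\[
\Ave_\pm\norm{\sum\pm g_k}\lesssim\norm{(\sum|g_k|^2)^{1/2}}_{p,q}.
\]
Equi-integrability and the common profile then yield $o(n^{1/p})$: split each $g_k$ into a part bounded by $M$, where the square function is $\le M\sqrt n$ on $(0,1)$ and hence of norm $O(\sqrt n)=o(n^{1/p})$ since $p<2$, and a tail whose contribution is $\le\delta n^{1/p}$ by uniform smallness. Consequently the disjoint part must carry the full norm, so $(h_k)$ spans $\ell_{p,q}$ on a subsequence. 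Disjointly supported sequences in $L_{p,q}(0,1)$ with shrinking supports, however, have a subsequence equivalent either to the $\ell_q$ basis or to a sequence dominated by the $\ell_{p,q}$ behaviour of disjoint blocks. A known fact, proved by the standard rearrangement computation, is that any disjoint sequence equivalent to $(e_k)$ of $\ell_{p,q}$ forces $p=q$. Comparing the norms of $\sum_{k\le n}h_k$ and $\sum_{k\le n}k^{-1/p}h_k$ against $n^{1/p}$ and $(\log n)^{1/q}$ gives a contradiction when $p\ne q$.

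\textbf{Step 3: the case $q\le1$, $p\ge2$.} For $p>2$ we use the inclusion $L_{p,q}(0,1)\hookrightarrow L_2$ together with the $q$-norm inequality $\norm{\sum u_k}^q\le\sum\norm{u_k}^q$, valid for disjoint pieces since $q\le1$. In one branch the $g$-part dominates: its Rademacher averages are controlled by $\sqrt n\ll n^{1/p}$ is false for $p>2$, so instead we use a lower estimate. Since $\ell_{p,q}$ with $q<p$ contains sums larger than $\ell_p$, the equi-integrable part, controlled in $L_2$, forces type-$2$ behaviour, which is incompatible with the $\ell_{p,q}$ lower bound on suitable coefficient blocks. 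In the other branch the disjoint part dominates and we argue as in Step 2. For $p=2$ we test with the weights $k^{-1/2}$. The target norm is $(\log n)^{1/q}$, while the common-profile part gives
\[
\Ave\norm{\sum\pm k^{-1/2}g_k}\lesssim\norm{G}\Big(\sum k^{-1}\Big)^{1/2}\cdot o(1)=o((\log n)^{1/2})\le o((\log n)^{1/q}),
\]
since $q\le1<2$. The main obstacle is exactly this critical estimate together with the $p>2$ branch argument. I would first try to establish the common-profile bound through the quasi-Banach square-function estimate $\norm{(\sum|c_kg_k|^2)^{1/2}}_{2,q}$, splitting $G$ at height $M$.
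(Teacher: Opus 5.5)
Your architecture matches the paper's: a common-profile/disjoint splitting, a truncated square-function estimate for $p<2$, the inclusion into $L_2$ for $p>2$, and the weights $k^{-1/2}$ at $p=2$. However, the estimates that carry each case are missing or false.

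For $p<2$, you claim the tail contributes at most $\delta n^{1/p}$ ``by uniform smallness''. This is the crux of the case and does not follow. Smallness of each $b_k$, combined with the Aoki--Rolewicz inequality applied to $\sum_k|b_k|\ge(\sum_k|b_k|^2)^{1/2}$, only gives $n^{1/\rho}\delta$. Moreover $\rho\le q<p$ whenever $q<p$, because $L_{p,q}(0,1)$ contains disjoint $\ell_q$-sequences. What you need is $\norm{(\sum_{k\le n}|b_k|^2)^{1/2}}_{p,q}\lesssim n^{1/p}\norm{b}_{p,q}$ whenever $b_k^*\le b^*$. This fails for $p>2$ (take all $b_k=b$), so it cannot be soft. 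The paper proves it in Lemma~\ref{lem:antimaj}: it compares $\sum_k|b_k|^2$ with its disjointification and uses that, since $p/2<1$, majorization reverses the quasi-norm of $L_{p/2,q/2}$. Separately, $U_n=o(n^{1/p})$ controls only flat Rademacher sums, so it does not make $(h_k)$ equivalent to the $\ell_{p,q}$ basis. All you may conclude is $V_n\asymp n^{1/p}$, to be compared with $V_n\asymp n^{1/q}$ from an $\ell_q$ subsequence after ruling out the norm-null alternative.

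Step~3 does not work as written.

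For $p>2$, type $2$ of $L_2$ is an upper estimate in the weaker norm, so it cannot contradict a lower $\ell_{p,q}$ bound in the stronger norm; the argument must run the other way. In the branch $\norm{h_k}_{p,q}\to0$:
\begin{itemize}
\item $(g_k)$ is itself equivalent to the $\ell_{p,q}$ basis;
\item the common profile forces $\inf_k\norm{g_k}_2>0$ (Lemma~\ref{lem:commonprofile6});
\item a Hilbert-space subsequence gives the \emph{lower} bound $\norm{\sum_{k\le n}g_k}_2\gtrsim n^{1/2}$ (Lemma~\ref{lem:hilbertgrowth});
\item this contradicts $\norm{\sum_{k\le n}g_k}_2\lesssim\norm{\sum_{k\le n}g_k}_{p,q}\lesssim n^{1/p}$.
\end{itemize}
In the disjoint branch, ``argue as in Step~2'' is invalid because $U_n=o(n^{1/p})$ is false for $p>2$: the Rademacher functions have common profile $\one$ and $U_n\asymp n^{1/2}\gg n^{1/p}$. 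The paper instead restricts to $\bigcup_{k\ge N}\supp h_k$, where the common-profile part is uniformly small, and uses $q$-subadditivity (Lemma~\ref{lem:qsubadd}) to get an $\ell_q$ lower estimate for $(g_k+h_k)$. Alternatively one could prove $U_n=O(n^{1/2})=o(n^{1/q})$. You supply neither.

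At $p=2$ your displayed bound is false. Already $g_k=G$ for all $k$ gives $\Ave\norm{\sum_{k\le n}\pm k^{-1/2}G}_{2,q}\asymp H_n^{1/2}\norm{G}_{2,q}$, where $H_n=\sum_{k\le n}k^{-1}$, so the factor $o(1)$ is wrong. Worse, by placing the high-level parts of $g_1,\dots,g_n$ on disjoint sets, one can make the norm at least $\epsilon_nH_n^{1/q}$, for every choice of signs and along a sparse sequence of $n$, with $\epsilon_n\to0$ so slowly that this is $\gg H_n^{1/2}$. The factor $H_n^{1/2}$ is correct only for the part truncated at height $M$. The tail requires the critical estimate $\norm{(\sum_{k\le n}|f_k|^2/k)^{1/2}}_{2,q}^q\lesssim H_n\norm{f}_{2,q}^q$ of Proposition~\ref{prop:criticalprofile}. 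Its proof uses a dyadic level decomposition, $q$-subadditivity, and the logarithmic shell bound of Lemma~\ref{lem:logshell}. Truncation then gives $o(H_n^{1/q})$, as in Theorem~\ref{thm:strictendpoint}.

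Finally, you do not carry out the disjoint branch at $p=2$. There, weighted $\ell_q$ sums of size $n^{1/q-1/2}$ must be played against the bound $O(H_n^{1/q})$.
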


The exclusion $p\neq q$ is sharp: when $q=p$, disjoint normalized indicators give an isometric copy of $\ell_p$ inside $L_p(0,1)$.

The subcritical part $p<2$ of Theorem~\ref{thm:main} is driven by a quantitative statement that may be useful beyond the present classification problem. Let $(r_k)$ be the Rademacher functions. If $(u_k)$ is a family in $L_{p,q}(0,1)$ whose rearrangements are dominated by one function $u$, set
\[
R_s(u_1,\ldots,u_n)=\left(\Ave_{\varepsilon_k=\pm1}\norm{\sum_{k=1}^n\varepsilon_k u_k}_{p,q}^s\right)^{1/s}.
\]
For $0<p<2$ and a suitable $s>0$ we prove
\begin{equation}\label{eq:intro-profile}
R_s(u_1,\ldots,u_n)=o(n^{1/p}).
\end{equation}
The bounded part of the common profile contributes only $O(n^{1/2})$. The tail is controlled at the natural $n^{1/p}$ scale by an anti-majorization estimate in $L_{p/2,q/2}$. Approximation of $u$ by bounded functions then turns the big-$O$ estimate into the little-$o$ estimate in \eqref{eq:intro-profile}. This is the technical point at which $p<2$ enters.

A subsequence splitting principle decomposes the image of the unit vector basis of $\ell_{p,q}$ into a common-profile part, a disjoint part and a norm-null perturbation. The first part satisfies \eqref{eq:intro-profile}. A seminormalized disjoint part has an $\ell_q$ subsequence. On the other hand, unconditionality of the source basis forces every randomized sum of the image sequence to have size comparable to
\[
\norm{\sum_{k=1}^n e_k}_{p,q}\asymp n^{1/p}.
\]
Consequently the disjoint part would have to grow simultaneously like $n^{1/q}$ and $n^{1/p}$, which is impossible when $p\neq q$.

\section{Preliminaries}
Let $(\Omega,\Sigma,\mu)$ be a $\sigma$-finite measure space. For a measurable function $f$ define
\[
d_f(\lambda)=\mu\{|f|>\lambda\},\qquad \lambda>0,
\]
and its decreasing rearrangement
\[
f^*(t)=\inf\{\lambda>0:d_f(\lambda)\le t\},\qquad t>0.
\]
For $0<p<\infty$ and $0<q<\infty$ we use the quasi-norm
\begin{equation}\label{eq:lorentz}
\norm{f}_{p,q}=\left(\int_0^{\infty}[t^{1/p}f^*(t)]^q\,\frac{dt}{t}\right)^{1/q},
\end{equation}
with the usual interpretation of the upper endpoint. For $q=\infty$,
\begin{equation}\label{eq:weak}
\norm{f}_{p,\infty}=\sup_{t>0}t^{1/p}f^*(t).
\end{equation}
Harmless normalization factors, such as $(q/p)^{1/q}$, are omitted. All embedding assertions are invariant under this choice.

For a scalar sequence $a=(a_k)$, let $(a_k^*)$ be the decreasing rearrangement of $(|a_k|)$. We take
\begin{equation}\label{eq:sequence}
\norm{a}_{\ell_{p,q}}=
\left(\sum_{k=1}^\infty (a_k^*)^q\bigl[k^{q/p}-(k-1)^{q/p}\bigr]\right)^{1/q}
\asymp
\left(\sum_{k=1}^\infty (a_k^*)^q k^{q/p-1}\right)^{1/q}.
\end{equation}
Thus the unit vector basis $(e_k)$ of $\ell_{p,q}$ is normalized, $1$-unconditional and symmetric. Its fundamental function is
\begin{equation}\label{eq:fundamental}
\varphi_{p,q}(n)=\norm{\sum_{k=1}^n e_k}_{\ell_{p,q}}\asymp n^{1/p},\qquad n\ge1.
\end{equation}
For $q=\infty$ the same formula holds with $\norm{a}_{p,\infty}=\sup_k k^{1/p}a_k^*$.

When $0<q<\infty$, $L_{p,q}$ has order-continuous quasi-norm. In particular, $L_\infty(0,1)$ is dense in $L_{p,q}(0,1)$. The space $L^0_{p,\infty}(0,1)$ is, by definition, the closure of $L_\infty(0,1)$ in $L_{p,\infty}(0,1)$, and $\ell^0_{p,\infty}$ is the closure of $c_{00}$ in $\ell_{p,\infty}$. These characterizations go back to the standard theory of Marcinkiewicz spaces; see \cite{BennettSharpley,KreinPetuninSemenov}.

Recall that two (quasi-) Banach spaces $X$ and $Y$ are said to be isomorphic if there exists an invertible bounded operator from $X$ onto $Y$. 
We say that $X$ embeds isomorphically into $Y$, and write
$
X\hookrightarrow Y,
$
if there exists a  linear subspace $Z\subset Y$ such that $Z$ is isomorphic to $X$. Equivalently, there exists a linear operator $T:X\to Y$
such that
$
C(T)^{-1}\Vert x\Vert _X\le \Vert Tx\Vert _Y\le C(T)\Vert x\Vert _X,
 x\in X, 
$
$C(T)$ is called the embedding constant of $X$ into $Y$.
If no such embedding exists,
we write 
$X\hooknot Y$.

We shall repeatedly use the Aoki--Rolewicz renorming theorem \cite{Aoki,Rolewicz}: every quasi-Banach space $X$ admits an equivalent quasi-norm, denoted here by $|||x|||_X$, and a number $0<\rho\le1$ such that
\begin{equation}\label{eq:aoki}
|||x+y|||_X^\rho\le |||x|||_X^\rho+|||y|||_X^\rho.
\end{equation}
Consequently, if $\norm{x_n}_X=o(A_n)$ and $\norm{x_n+y_n}_X\ge cA_n$, then
\begin{equation}\label{eq:reverse}
\norm{y_n}_X\ge c'A_n
\end{equation}
for all sufficiently large $n$. This elementary observation replaces every use of the reverse triangle inequality below.

We also use the small perturbation principle for basic sequences in a quasi-Banach space. If $(x_k)$ is a basic sequence with coordinate functionals $(x_k^*)$ and
\begin{equation}\label{eq:smallpert}
\sum_k\norm{x_k^*}\,\norm{x_k-y_k}<1,
\end{equation}
then $(y_k)$ is equivalent to $(x_k)$. The standard Neumann-series proof holds after an Aoki--Rolewicz renorming; see \cite[Chapter~1]{AlbiacKalton}. We shall use it only after passing to a subsequence for which the perturbations have summable $\rho$-powers.

Let $X$ be a quasi-Banach lattice. For a finite family $(f_k)_{k=1}^n$ put
\[
S_2(f_1,\ldots,f_n)=\left(\sum_{k=1}^n|f_k|^2\right)^{1/2}.
\]
The lattice-valued form of Khintchine's inequality says that, for every $0<s<\infty$,
\begin{equation}\label{eq:khintchine}
\left(\Ave_{\varepsilon_k=\pm1}\abs{\sum_{k=1}^n\varepsilon_k f_k}^{s}\right)^{1/s}\asymp_s S_2(f_1,\ldots,f_n)
\end{equation}
in the lattice order. This follows by applying the homogeneous lattice functional calculus to the scalar Khintchine inequality. A convenient modern reference, including the quasi-Banach formulation, is \cite[Theorem~2.2]{AlbiacAnsorenaBello}; see also \cite{Kalton}.

For $q<\infty$, the Lorentz lattice $L_{p,q}$ is $r$-concave for every $r>\max\{p,q\}$. Thus the $L_{p,q}$ quasi-norm may be moved outside an $L_r$-average in the direction needed below. This finite-concavity argument is unavailable at the weak endpoint $q=\infty$, a distinction used later.

\begin{lemma}\label{lem:rconcavity}
Let $0<p<\infty$ and $0<q<\infty$. If $r>\max\{p,q,2\}$, there is $C=C(p,q,r)$ such that every finite family in $L_{p,q}$ satisfies
\begin{equation}\label{eq:rconcavity}
\left(\Ave_{\varepsilon_k=\pm1}\norm{\sum_{k=1}^n\varepsilon_k f_k}_{p,q}^{r}\right)^{1/r}
\le C\norm{S_2(f_1,\ldots,f_n)}_{p,q}.
\end{equation}
\end{lemma}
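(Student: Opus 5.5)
The plan is to go through the power $r$ of the lattice quasi-norm: $r$-concavity lets us interchange the $L_{p,q}$ quasi-norm with the Rademacher average. Write $F_\varepsilon=\sum_k\varepsilon_k f_k$. We have
\[
\norm{g}_{p,q}^r=\norm{|g|^r}_{p/r,q/r},
\]
where $\norm{\cdot}_{p/r,q/r}$ is the Lorentz quasi-norm with indices $p/r<1$ and $q/r<1$. This identity is immediate from \eqref{eq:lorentz}, since $(|g|^r)^*=(g^*)^r$. Hence
\[
\Ave_\varepsilon\norm{F_\varepsilon}_{p,q}^r=\Ave_\varepsilon\norm{|F_\varepsilon|^r}_{p/r,q/r}.
\]
The key structural input is a reverse triangle inequality for nonnegative functions in $L_{a,b}$ with $a=p/r<1$ and $b=q/r<1$:
\begin{equation*}
\sum_j\norm{g_j}_{a,b}\le C\,\norm{\textstyle\sum_j g_j}_{a,b},\qquad g_j\ge0.
\end{equation*}
This is exactly the statement that $L_{p,q}$ is $r$-concave (the claim recalled just before the lemma), and it is the step I expect to be the main obstacle.

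To justify it, I would use the known fact that $L_{a,b}$ with $a,b<1$ is $1$-concave, up to constants. One route is to use the equivalent expression
\[
\norm{g}_{a,b}^b\asymp\int_0^\infty \lambda^{b-1}d_g(\lambda)^{b/a}\,d\lambda .
\]
Since $b/a=q/p$ is fixed and everything is raised to powers below $1$, one would like superadditivity from concavity of power functions. It is not immediate, however, because $d_{\sum g_j}$ is not simply the sum of the $d_{g_j}$. Rather than prove this from scratch, I would cite the standard result: $L_{p,q}$ is $r$-concave for $r>\max\{p,q\}$, which follows because $L_{p,q}$ satisfies a lower $\max(p,q)$-estimate, and a lower $s$-estimate implies $r$-concavity for $r>s$ (Lindenstrauss--Tzafriri, whose argument extends to quasi-Banach lattices; Kalton). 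If one has to be careful in the nonlocally convex range, this citation is the point to check.

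Given concavity, apply it to the $2^n$ functions $g_\varepsilon=2^{-n}|F_\varepsilon|^r$:
\[
\Ave_\varepsilon\norm{|F_\varepsilon|^r}_{p/r,q/r}\le C\norm{\Ave_\varepsilon|F_\varepsilon|^r}_{p/r,q/r}
=C\norm{\Bigl(\Ave_\varepsilon|F_\varepsilon|^r\Bigr)^{1/r}}_{p,q}^r .
\]
Equivalently, in the language of $r$-concavity directly,
\[
\Bigl(\sum_\varepsilon\norm{2^{-n/r}F_\varepsilon}_{p,q}^r\Bigr)^{1/r}\le C\norm{\Bigl(\sum_\varepsilon|2^{-n/r}F_\varepsilon|^r\Bigr)^{1/r}}_{p,q}.
\]
Finally, the lattice Khintchine inequality \eqref{eq:khintchine} with $s=r$ gives the pointwise bound $(\Ave_\varepsilon|F_\varepsilon|^r)^{1/r}\le C_r S_2(f_1,\ldots,f_n)$. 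Since $\norm{\cdot}_{p,q}$ is a lattice quasi-norm, hence monotone in the pointwise order, this yields \eqref{eq:rconcavity} with $C=C(p,q,r)$ independent of $n$.

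The assumption $r>2$ is only needed so that the Khintchine constant is finite and harmless; Khintchine holds for every $s$, so it imposes no real restriction. The substantive requirement is $r>\max\{p,q\}$, which is what makes the concavity step available.
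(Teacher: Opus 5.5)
Your proposal is correct and is essentially the paper's argument. The paper simply cites the $r$-concavity of $L_{p,q}$ for $r>\max\{p,q\}$ together with the lattice Khintchine inequality. You make the norm--average interchange explicit: you rewrite $r$-concavity as $1$-concavity of $L_{p/r,q/r}$, apply it to the $2^n$ functions $2^{-n}|F_\varepsilon|^r$, and finish with the pointwise Khintchine bound and lattice monotonicity.

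As in the paper, the concavity fact is cited rather than proved. Your suggested justification is the standard one and is sound: a lower $\max\{p,q\}$-estimate, transferred to the Banach lattice $L_{p/\alpha,q/\alpha}$ by convexification, followed by Lindenstrauss--Tzafriri.
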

\begin{proof}
This is the standard combination of the $r$-concavity of Lorentz lattices for $r>\max\{p,q\}$ (see \cite[Chapter~2]{BennettSharpley}) with the lattice-valued Khintchine inequality \eqref{eq:khintchine}. We therefore use \eqref{eq:rconcavity} as a cited structural estimate rather than reproduce its classical proof.
\end{proof}

\section{Rademacher estimates}
The decisive estimate in the range $p<2$ is most transparent after squaring. Then the first Lorentz index becomes $p/2<1$, and majorization reverses the usual norm comparison. We record the precise form needed later.

If $f_1,\ldots,f_n$ are functions, let $\bigoplus_{k=1}^n f_k$ denote their disjoint sum on a measure space of total measure $n$ times the original one. Recall that $a\prec b$ means $\int_0^t a^*\le\int_0^t b^*$ for all $t>0$ and equality holds for the full integrals whenever both are integrable.

\begin{lemma}\label{lem:antimaj}
Let $0<p<2$ and $0<q<\infty$. Suppose $f_1,\ldots,f_n\in L_{p,q}(0,1)$ and there is $f\in L_{p,q}(0,1)$ such that $f_k^*\le f^*$ for every $k$. Then
\begin{equation}\label{eq:antimaj}
\norm{\left(\sum_{k=1}^n|f_k|^2\right)^{1/2}}_{p,q}\le C_{p,q}n^{1/p}\norm{f}_{p,q}.
\end{equation}
\end{lemma}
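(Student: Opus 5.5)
The plan is to square everything, so that the first Lorentz index drops below $1$, and then exploit the fact that in $L_{r,s}$ with $r<1$ a more spread-out function has larger quasi-norm.

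\textbf{Reduction.} Put $g_k=|f_k|^2$, $g=\sum_{k=1}^n g_k$, $r=p/2<1$ and $s=q/2$. Since $\norm{h^{1/2}}_{p,q}^2=\norm{h}_{r,s}$, inequality \eqref{eq:antimaj} is equivalent to
\[
\norm{g}_{r,s}\le C\,n^{2/p}\norm{f^2}_{r,s}.
\]
By monotone convergence we may first truncate the $f_k$ and $f$. So we assume everything is bounded on $(0,1)$, hence integrable.

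\textbf{Majorization.} Let $G=\bigoplus_{k=1}^n g_k$ on a space of measure $n$. I will show $G\prec g$. Given $t>0$, the quantity $\int_0^tG^*$ is a supremum of $\sum_k\int_{E_k}g_k$ over sets $E_k\subset(0,1)$ with $\sum_k|E_k|\le t$. Putting $E=\bigcup_k E_k$, we have $|E|\le t$ and
\[
\sum_k\int_{E_k}g_k\le\int_E g\le\int_0^t g^*.
\]
The full integrals of $G$ and $g$ coincide. Subtracting, the tails reverse:
\[
\int_t^\infty g^*\le\int_t^\infty G^*\qquad\text{for all } t>0.
\]

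\textbf{Tail form of the quasi-norm (main obstacle).} The key step is a tail-integral description of the quasi-norm, valid because $r<1$:
\[
\norm{h}_{r,s}\asymp\Bigl(\int_0^\infty\Bigl[t^{1/r-1}\int_t^\infty h^*\Bigr]^s\frac{dt}{t}\Bigr)^{1/s}.
\]
The bound of the left side by the right side is elementary: $\int_t^\infty h^*\ge t\,h^*(2t)$, then change variables. The reverse bound is the dual Hardy inequality with weight exponent $1/r-1>0$. This is standard for $s\ge1$. For $s<1$ I would discretize: write $\int_t^\infty h^*\le\sum_{j\ge0}2^jt\,h^*(2^jt)$, use $s$-subadditivity $(\sum a_j)^s\le\sum a_j^s$, and sum the resulting geometric series in $j$. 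That series converges exactly because $1/r-1>0$, i.e.\ $p<2$. I expect this verification, especially the case $s<1$, to be the delicate point.

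\textbf{Conclusion.} Applying the equivalence to the tail inequality from the majorization step gives $\norm{g}_{r,s}\le C\norm{G}_{r,s}$. Next, the hypothesis $f_k^*\le f^*$ gives $d_G(\lambda)\le n\,d_{f^2}(\lambda)$, hence $G^*(t)\le(f^2)^*(t/n)$. The substitution $t=nu$ then yields
\[
\norm{G}_{r,s}\le n^{1/r}\norm{f^2}_{r,s}=n^{2/p}\norm{f}_{p,q}^2.
\]
Taking square roots proves \eqref{eq:antimaj}, and undoing the truncation by monotone convergence finishes the proof.
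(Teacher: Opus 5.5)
Your proposal is correct and has the same skeleton as the paper's proof. You pass to $L_{p/2,q/2}$, use the disjointification majorization $\bigoplus_k|f_k|^2\prec\sum_k|f_k|^2$, apply anti-monotonicity of the $L_{r,s}$ quasi-norm under majorization when $r<1$, and finish with the dilation estimate $\|\bigoplus_k|f|^2\|_{r,s}=n^{1/r}\|f^2\|_{r,s}$. The one real difference is at the anti-monotonicity step. The paper cites the anti-monotone half of the Lorentz--Shimogaki principle from Cadilhac--Sukochev--Zanin. You instead prove it by rewriting the quasi-norm through the tails $\int_t^\infty h^*$ and comparing tails, which the majorization reverses once the total integrals agree. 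Your route is self-contained and shows exactly where $p<2$ is used, namely in the convergence of $\sum_{j\ge0}2^{js(1-1/r)}$. The dyadic discretization you describe for $s<1$ also works for $s\ge1$ if you use the triangle inequality in $L^s(dt/t)$ instead of $s$-subadditivity, so no separate Hardy inequality is needed. One detail should be stated explicitly: $f_k$ and $f$ must be truncated at a common level $M$, because $\min\{f_k^*,M\}\le\min\{f^*,M\}$ is what preserves the hypothesis in the bounded case.
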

\begin{proof}
We first assume that all functions are bounded and have support of finite measure. Put
\[
A=\sum_{k=1}^n|f_k|^2,\qquad B=\bigoplus_{k=1}^n|f_k|^2.
\]
The elementary disjointification inequality gives $B\prec A$; see, for example, \cite[Chapter~II]{BennettSharpley}. Since $p/2<1$, the Lorentz quasi-norm in $L_{p/2,q/2}$ is anti-monotone under majorization:
\begin{equation}\label{eq:lorentzshim}
\norm{A}_{p/2,q/2}\le C_{p,q}\norm{B}_{p/2,q/2}.
\end{equation}
This is precisely the anti-monotone half of the Lorentz--Shimogaki principle; we invoke it from \cite[Theorem~1.1(b)]{CadilhacSukochevZanin} rather than reproduce its proof.

The assumption $f_k^*\le f^*$ and lattice monotonicity give
\[
\norm{B}_{p/2,q/2}\le \norm{\bigoplus_{k=1}^n|f|^2}_{p/2,q/2}
=n^{2/p}\norm{|f|^2}_{p/2,q/2}=n^{2/p}\norm{f}_{p,q}^2.
\]
Taking square roots proves \eqref{eq:antimaj} for the bounded case. Truncation and order continuity prove it in general.
\end{proof}

The next proposition is the quantitative core of the main theorem.
\begin{proposition}\label{prop:profile}
Let $0<p<2$ and $0<q<\infty$. Suppose that $(u_k)$ is a sequence in $L_{p,q}(0,1)$ and that $u_k^*\le u^*$ for some $u\in L_{p,q}(0,1)$. Fix $r>\max\{p,q,2\}$. Then
\begin{equation}\label{eq:profile}
\left(\Ave_{\varepsilon_k=\pm1}\norm{\sum_{k=1}^n\varepsilon_k u_k}_{p,q}^{r}\right)^{1/r}=o(n^{1/p}).
\end{equation}
\end{proposition}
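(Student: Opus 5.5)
The plan is to combine Lemma~\ref{lem:rconcavity} with Lemma~\ref{lem:antimaj}, after splitting the common profile $u$ into a bounded part and a small tail. By Lemma~\ref{lem:rconcavity}, the left-hand side of \eqref{eq:profile} is at most $C\norm{S_2(u_1,\ldots,u_n)}_{p,q}$. So it suffices to show $\norm{S_2(u_1,\ldots,u_n)}_{p,q}=o(n^{1/p})$.

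Fix $\delta>0$. By order continuity, since $q<\infty$, choose $M$ such that $v=u^*\one_{\{u^*>M\}}$, viewed as a function on $(0,1)$, satisfies $\norm{v}_{p,q}<\delta$. For each $k$ we decompose $u_k=b_k+t_k$, where
\[
b_k=u_k\one_{\{|u_k|\le M\}},\qquad t_k=u_k\one_{\{|u_k|>M\}}.
\]
Then $|b_k|\le M$ and $t_k^*\le (u_k^*)\one_{\{u_k^*>M\}}\le v$. The last inequality holds because $u_k^*\le u^*$ and both rearrangements are nonincreasing, so $\{u_k^*>M\}\subset\{u^*>M\}$ as initial intervals. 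Pointwise we have $S_2(u_k)\le S_2(b_k)+S_2(t_k)$, and the quasi-triangle inequality gives
\[
\norm{S_2(u)}_{p,q}\le K\bigl(\norm{S_2(b)}_{p,q}+\norm{S_2(t)}_{p,q}\bigr).
\]

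For the bounded part, $S_2(b_1,\ldots,b_n)\le M n^{1/2}$ on $(0,1)$. Since $\norm{\one_{(0,1)}}_{p,q}$ is finite, this gives $\norm{S_2(b)}_{p,q}\le C M n^{1/2}$. For the tail, Lemma~\ref{lem:antimaj} applies with profile $v$ (here $p<2$ is used) and gives $\norm{S_2(t)}_{p,q}\le C_{p,q}n^{1/p}\delta$. Hence
\[
\limsup_n n^{-1/p}\norm{S_2(u)}_{p,q}\le K C_{p,q}\delta+\limsup_n KCMn^{1/2-1/p}=KC_{p,q}\delta,
\]
because $1/2<1/p$. Letting $\delta\to0$ proves \eqref{eq:profile}.

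The main obstacle is the tail step. One has to make sure that Lemma~\ref{lem:antimaj} genuinely applies to the truncated pieces $t_k$, with a constant uniform in $n$, and that the profile $v$ is a legitimate element of $L_{p,q}(0,1)$ with small quasi-norm. The rearrangement comparison $t_k^*\le v$ is the delicate point. I would verify it through distribution functions: for $\lambda\ge M$ we have $d_{t_k}(\lambda)=d_{u_k}(\lambda)\le d_u(\lambda)$, and for $\lambda<M$ we have $d_{t_k}(\lambda)=d_{u_k}(M)\le d_u(M)=d_v(\lambda)$. The estimates for the bounded part and the application of Lemma~\ref{lem:rconcavity} are routine by comparison.
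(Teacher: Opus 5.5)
Your proof is correct and follows the paper's argument. You truncate the common profile at a level $M$, bound the bounded part by $CMn^{1/2}$, control the tail at scale $n^{1/p}\norm{v}_{p,q}$ via Lemma~\ref{lem:antimaj}, and then let the tail norm tend to zero. The only difference is in the order of steps: you apply Lemma~\ref{lem:rconcavity} once and split at the level of the square function, whereas the paper splits the Rademacher sums first and recombines them via the Aoki--Rolewicz inequality in the sign average; your distribution-function check of $t_k^*\le v$ also makes precise the paper's informal ``cutting at the same distribution level'' step.
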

\begin{proof}
Fix $\eta>0$. By order continuity, choose a decomposition $u=a+b$ with $a\in L_\infty(0,1)$ and $\norm{b}_{p,q}<\eta$. For instance, take $a=\sgn(u)\min\{|u|,M\}$ and $b=u-a$ for sufficiently large $M$. The rearrangement domination $u_k^*\le u^*$ permits equimeasurable decompositions $u_k=a_k+b_k$ with
\begin{equation}\label{eq:decomp}
\norm{a_k}_\infty\le M,\qquad b_k^*\le b^*,\qquad k\ge1.
\end{equation}
One can obtain these by cutting $u_k$ at the same distribution level as $u$; an arbitrarily small discrepancy at a level set is absorbed into $b_k$.

By Lemma~\ref{lem:rconcavity} and the pointwise bound $S_2(a_1,\ldots,a_n)\le Mn^{1/2}\one$, we have
\begin{equation}\label{eq:boundedpart}
\left(\Ave_\varepsilon\norm{\sum_{k=1}^n\varepsilon_k a_k}_{p,q}^{r}\right)^{1/r}
\le C_{p,q,r}Mn^{1/2}.
\end{equation}
Applying Lemma~\ref{lem:rconcavity} and Lemma~\ref{lem:antimaj} to $(b_k)$ gives
\begin{equation}\label{eq:tailpart}
\left(\Ave_\varepsilon\norm{\sum_{k=1}^n\varepsilon_k b_k}_{p,q}^{r}\right)^{1/r}
\le C_{p,q,r}n^{1/p}\norm{b}_{p,q}\le C_{p,q,r}\eta n^{1/p}.
\end{equation}
The Aoki--Rolewicz quasi-triangle inequality on the finite sign probability space, followed by the scalar $L_r$ triangle inequality, combines \eqref{eq:boundedpart}--\eqref{eq:tailpart} to yield
\[
\limsup_{n\to\infty}n^{-1/p}
\left(\Ave_\varepsilon\norm{\sum_{k=1}^n\varepsilon_k u_k}_{p,q}^{r}\right)^{1/r}
\le C_{p,q,r}\eta,
\]
because $p<2$. Letting $\eta\downarrow0$ proves \eqref{eq:profile}.
\end{proof}

\begin{remark}
The estimate $O(n^{1/p})$ alone contains no obstruction: it agrees with the fundamental function of the atomic source. The bounded-tail decomposition is what converts the square-function estimate into a strict asymptotic gap. This strictness is lost at $p=2$, where the bounded contribution in \eqref{eq:boundedpart} already has the critical size $n^{1/2}$.
\end{remark}

\section{Subsequence splitting}
We need a commutative subsequence splitting principle in a form that remains valid for nonlocally convex Lorentz spaces. The Banach version originates in the Kadec--Pe\l czy\'nski method and was developed for symmetric spaces in \cite{Weis,DoddsSemenovSukochev}. The formulation below is tailored to the order-continuous Lorentz scale.

\begin{lemma}\label{lem:envelope}
Let $0<p,q<\infty$, and let $(g_n)$ be a bounded sequence in $L_{p,q}(0,1)$ which is uniformly absolutely continuous in the following sense:
\begin{equation}\label{eq:uac}
\omega(\delta):=\sup_n\sup_{m(A)\le\delta}\norm{g_n\one_A}_{p,q}\longrightarrow0\qquad(\delta\downarrow0).
\end{equation}
Then $(g_n)$ has a subsequence, still denoted by $(g_n)$, and there exists $u\in L_{p,q}(0,1)$ such that
\[
g_n^*\le u^*\qquad(n\ge1).
\]
\end{lemma}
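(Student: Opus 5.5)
The plan is to work entirely with the decreasing rearrangements $g_n^*$, which are nonincreasing functions on $(0,1)$. The idea is to extract a subsequence along which they converge in $L_{p,q}(0,1)$ to some limit $g$, quickly enough that the pointwise envelope
\[
u=g+\sum_k\abs{g_{n_k}^*-g}
\]
still lies in $L_{p,q}(0,1)$. Then $g_{n_k}^*\le u$ pointwise. Since $g_{n_k}^*$ is already decreasing, this gives $g_{n_k}^*=(g_{n_k}^*)^*\le u^*$, which is the claim.

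\emph{Step 1 (pointwise control and Helly selection).} Put $K=\sup_n\norm{g_n}_{p,q}$. Monotonicity of $g_n^*$ gives, for $0<t<1$,
\[
g_n^*(t)\,\norm{\one_{(0,t)}}_{p,q}\le\norm{g_n^*\one_{(0,t)}}_{p,q}\le K,
\]
so $g_n^*(t)\le CKt^{-1/p}$. Hence on each $[\delta,1)$ the $g_n^*$ form a uniformly bounded family of monotone functions. By Helly's selection theorem and a diagonal argument over $\delta=1/m$, some subsequence converges at every point of $(0,1)$ to a nonincreasing function $g$. Fatou's lemma applied to the integral in \eqref{eq:lorentz} then gives $g\in L_{p,q}(0,1)$.

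\emph{Step 2 (norm convergence from uniform absolute continuity).} The key observation is that
\[
\norm{g_n^*\one_{(0,\delta)}}_{p,q}=\sup_{m(A)\le\delta}\norm{g_n\one_A}_{p,q}\le\omega(\delta),
\]
because for a set $A$ of measure $\delta$ on which $\abs{g_n}$ takes its largest values, $(g_n\one_A)^*=g_n^*\one_{(0,\delta)}$. By Fatou, also $\norm{g\one_{(0,\delta)}}_{p,q}\le\omega(\delta)$. On $[\delta,1)$ we have $\abs{g_n^*-g}\le 2CK\delta^{-1/p}$, and this converges to $0$ pointwise. Dominated convergence on the finite-measure set $[\delta,1)$ then gives $\norm{(g_n^*-g)\one_{[\delta,1)}}_{p,q}\to0$. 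Using the Aoki--Rolewicz quasi-norm \eqref{eq:aoki} we get
\[
\limsup_n|||g_n^*-g|||^\rho\le C\,\omega(\delta)^\rho,
\]
and letting $\delta\downarrow0$ shows $g_n^*\to g$ in $L_{p,q}(0,1)$.

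\emph{Step 3 (envelope).} Pass to a further subsequence with $|||g_{n_k}^*-g|||^\rho\le2^{-k}$. By $\rho$-subadditivity and completeness, the partial sums of $g+\sum_k\abs{g_{n_k}^*-g}$ are Cauchy. They increase pointwise, so by the Fatou property their pointwise limit $u$ belongs to $L_{p,q}(0,1)$. Moreover $u\ge g+\abs{g_{n_k}^*-g}\ge g_{n_k}^*$ for each $k$, which concludes the argument as explained at the start. Relabelling the subsequence as $(g_n)$ gives the statement.

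The main obstacle is Step 2, in particular justifying the identity $\norm{g_n^*\one_{(0,\delta)}}_{p,q}=\sup_{m(A)\le\delta}\norm{g_n\one_A}_{p,q}$ on the nonatomic space $(0,1)$. If $\abs{g_n}$ has a level set at the critical level, $A$ should be chosen as the set where $\abs{g_n}>g_n^*(\delta)$ together with a piece of the level set of the right measure. This is possible since $(0,1)$ is nonatomic. The remaining ingredients (Helly, Fatou, dominated convergence) are routine even though the quasi-norm is not convex.
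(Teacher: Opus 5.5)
Your proposal is correct and follows the paper's proof essentially step for step. Both arguments use Helly selection for the rearrangements and get uniform smallness on $(0,\delta)$ from \eqref{eq:uac} (via a maximal set $A$), with Fatou for the limit. Both then use bounded convergence on $[\delta,1)$ to obtain norm convergence, and finally take the envelope $g+\sum_k|g_{n_k}^*-g|$ along a subsequence with summable Aoki--Rolewicz powers. Your justification of the maximal-set identity on the nonatomic space $(0,1)$ fills in a detail the paper only sketches.
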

\begin{proof}
Put $h_n=g_n^*$. Boundedness gives $h_n(t)\lesssim_{p,q}t^{-1/p}$, and Helly selection yields a subsequence converging almost everywhere to a decreasing $h\in L_{p,q}(0,1)$. Condition \eqref{eq:uac}, applied to sets carrying the largest $\delta$-portion of $|g_n|$, gives uniform smallness of $h_n$ on $(0,\delta)$; Fatou gives the same for $h$. Splitting $(0,1)$ into $(0,\delta)$ and $[\delta,1]$, order continuity and bounded convergence then give $h_n\to h$ in $L_{p,q}$. Passing to a further subsequence with summable Aoki--Rolewicz powers and setting $U=h+\sum_n|h_n-h|$, we obtain $U\in L_{p,q}$ and $h_n\le U$ for every $n$. Hence $g_n^*=h_n\le U^*$, as required. This is the usual rearrangement-compactness argument in an order-continuous Lorentz lattice; compare \cite[Chapter~II]{BennettSharpley}.
\end{proof}
\addtocounter{equation}{5}

\begin{remark}
In the commutative finite-measure setting, Randrianantoanina's $E$-equi-integrability \cite[Definition~2.5]{Randrianantoanina2002} implies \eqref{eq:uac}. This is the standard decreasing-set consequence of the definition: otherwise a sequence of exceptional sets may be replaced by their decreasing tail unions, contradicting equi-integrability.
\end{remark}

We shall use the following convenient form of the Kadec--Pe\l czy\'nski type subsequence splitting principle \cite{KadecPelczynski}. It follows from \cite[Theorem~3.9]{Randrianantoanina2002}, together with the preceding remark and Lemma~\ref{lem:envelope}.

\begin{principle}\label{pr:splitting}
Let $0<p,q<\infty$, and let $(f_n)$ be bounded in $L_{p,q}(0,1)$. There are a subsequence, still denoted by $(f_n)$, and sequences $(u_n)$, $(v_n)$ and $(w_n)$ in $L_{p,q}(0,1)$ such that
\begin{enumerate}[label=(\roman*)]
\item $f_n=u_n+v_n+w_n$;
\item $u_n^*\le u^*$ for every $n$, for some fixed $u\in L_{p,q}(0,1)$;
\item the functions $v_n$ are pairwise disjointly supported;
\item $\norm{w_n}_{p,q}\to0$.
\end{enumerate}
Moreover, if $(f_n)$ is a basic sequence, the subsequence can be chosen so that $(u_n+v_n)$ is equivalent to $(f_n)$.
\end{principle}
\begin{proof}
Apply Randrianantoanina's subsequence splitting theorem \cite[Theorem~3.9]{Randrianantoanina2002}: after passage, $f_n=g_n+v_n$ with $(g_n)$ $L_{p,q}$-equi-integrable and $(v_n)$ disjoint. The preceding remark and Lemma~\ref{lem:envelope} give a fixed $u$ with $g_n^*\le u^*$. Thus one may take $u_n=g_n$ and $w_n=0$; the assertion for basic sequences is immediate.
\end{proof}

For later use, we retain the perturbative formulation above, although the present argument actually yields the stronger conclusion $w_n=0$.

We restate a classical result in what follows (\cite[Lemma 2.1]{CarothersDilworth}, additional references can be found in \cite{1,Dilworth,2,3,4,HuangEtAl}).

\begin{proposition}\label{prop:disjoint}
Suppose that $0<p,q<\infty$. Let $(f_n)$ be a sequence of unit vectors in $L_{p,q}$ such that $f_n^*\to0$ pointwise. Then some subsequence of $(f_n)$ is equivalent to the unit vector basis of $\ell_q$.
\end{proposition}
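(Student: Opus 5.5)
Since $\|f_n\|_{p,q}=1$, each rearrangement satisfies $f_n^*(t)\lesssim t^{-1/p}$. Together with $f_n^*\to0$ pointwise, this says the mass of $f_n$ concentrates on sets of vanishing measure. The plan is the standard gliding-hump construction: we pass to a subsequence that is a small perturbation of a disjointly supported sequence $(g_k)$, and then show directly that disjoint functions of this kind span $\ell_q$.

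\textbf{Step 1 (disjointification).} Fix $\varepsilon_k\downarrow0$ with $\sum_k\varepsilon_k^\rho$ small, where $\rho$ is the Aoki--Rolewicz exponent from \eqref{eq:aoki}. Inductively choose indices $n_1<n_2<\cdots$, levels $\lambda_k$ and measures $\delta_k\downarrow0$ so that the following two conditions hold.
\begin{itemize}
\item The tail is negligible: $\norm{f_{n_k}\one_{\{|f_{n_k}|\le\lambda_k\}}}_{p,q}<\varepsilon_k$. This is possible because $f_n^*\to0$ pointwise and $f_n^*(t)\lesssim t^{-1/p}$, so by dominated convergence on $(\delta,1)$ and the uniform bound on $\int_0^\delta$ the small values of $f_n$ contribute little.
\item The head is thin: $E_k=\{|f_{n_k}|>\lambda_k\}$ has $m(E_k)<\delta_k$, and $\delta_{k+1}$ is so small that $\norm{f_{n_j}\one_A}_{p,q}<\varepsilon_{k+1}$ whenever $m(A)\le\sum_{i>k}\delta_i$, for every $j\le k$. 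This uses order continuity of each fixed $f_{n_j}$.
\end{itemize}
Then set $g_k=f_{n_k}\one_{E_k\setminus\bigcup_{i>k}E_i}$. These functions are disjoint, and $\norm{f_{n_k}-g_k}_{p,q}^\rho\lesssim\varepsilon_k^\rho$.

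\textbf{Step 2 (disjoint sums are $\ell_q$).} Take disjoint $g_k$ with $\norm{g_k}_{p,q}\approx1$ and $m(\supp g_k)$ decreasing super-geometrically. We must show $\norm{\sum a_kg_k}_{p,q}\asymp(\sum|a_k|^q)^{1/q}$. Write $\norm{h}_{p,q}^q\asymp\sum_{j\in\mathbb Z}2^{jq}\,m\{|h|>2^j\}^{q/p}$. Because the functions are disjoint, the distribution function of the sum is $\sum_k m\{|a_kg_k|>\lambda\}$. The required estimate is then
\[
\Bigl(\sum_k d_k(\lambda)\Bigr)^{q/p}\approx\sum_k d_k(\lambda)^{q/p}.
\]
This is exactly where the rapid decay of supports enters: at each level $\lambda$ the sequence $d_k(\lambda)$ is dominated by essentially one term, up to a geometric series. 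Since $f_n^*\to0$, we may choose $\lambda_k$ so that the portion of $f_{n_k}$ carrying norm $\approx1$ lives at heights where earlier functions have negligible distribution. I expect this step, controlling the level overlap, to be the main obstacle. I would handle it by selecting, in addition, a band $[\mu_k,\lambda'_k]$ of values that carries almost all of the norm of $g_k$, with the bands for different $k$ widely separated on the log-scale. On separated bands the Lorentz sum genuinely splits, and the cross terms are absorbed into the $\varepsilon_k$ errors.

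\textbf{Step 3 (perturbation).} The disjoint normalized sequence $(g_k)$ is $1$-unconditional, and by Step 2 it is equivalent to the unit vector basis of $\ell_q$. Its coordinate functionals are uniformly bounded. The small perturbation principle \eqref{eq:smallpert}, applied with the summable $\rho$-power errors from Step 1, therefore shows that $(f_{n_k})$ is equivalent to $(g_k)$, and hence to the $\ell_q$ basis.
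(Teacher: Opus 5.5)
\textbf{There is a genuine gap in Step 2; the rest of your outline is sound, and your Step 1 already supplies what is needed to close it.} Steps 1 and 3 work on $(0,1)$, which is the only setting in which the paper uses the statement. (On $(0,\infty)$ the truncation from below in Step 1 breaks down, e.g.\ for $n^{-1/p}\one_{(0,n)}$.) Step 2 carries the whole content of the proposition; the paper does not prove it but quotes Carothers--Dilworth.

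\textbf{Step 2 is false as you formulate it.} Super-geometric decay of $m(\supp g_k)$ does not force $\ell_q$ behaviour. Let $g_k=\gamma^{-1/p}\one_{G_k}+\eta\one_{S_k}$, $k\le m$, be disjoint, where:
\begin{itemize}
\item the spikes $G_k$ all have the same measure $\gamma$;
\item the plateaus $S_k$ have measures that decrease super-geometrically but stay above $\gamma$;
\item $\eta$ is so small that the plateaus are negligible in norm.
\end{itemize}
Then $\norm{\sum_{k\le m}g_k}_{p,q}\approx m^{1/p}$ rather than $m^{1/q}$. Concatenating such blocks with $m\to\infty$ gives a single sequence.

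\textbf{Your proposed remedy does not repair this.} Bands $[\mu_k,\lambda'_k]$ of \emph{values} are not stable under the coefficients: $a_kg_k$ takes its values in $|a_k|[\mu_k,\lambda'_k]$, and a ratio such as $|a_{k+1}|/|a_k|=\mu_k/\mu_{k+1}$ realigns two bands. So the splitting you invoke only covers coefficients of comparable size. Equivalence with the $\ell_q$ basis concerns all finitely supported $(a_k)$. The claim that $\sum_k d_{a_kg_k}(\lambda)$ is dominated by one term at each level is therefore left unproved.

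\textbf{The missing idea is to separate in the rearrangement variable $t$, which multiplication by $a_k$ does not move.} The hypothesis you need is exactly your second bullet in Step 1. Take $\delta_{i+1}\le\delta_i/2$, and put $\sigma_k=\sum_{i>k}\delta_i$ and $\sigma_0=\sum_i\delta_i$. Then $g_k^*=0$ on $[\delta_k,\infty)$, and
\[
\norm{g_k^*\one_{(0,\sigma_k)}}_{p,q}\le\norm{f_{n_k}^*\one_{(0,\sigma_k)}}_{p,q}<\varepsilon_{k+1}.
\]
So $g_k$ carries almost all of its norm on $W_k=(\sigma_k,\delta_k]$, and these windows are pairwise disjoint.

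\emph{Lower bound.} For $h=\sum_ka_kg_k$, disjointness gives $|h|\ge|a_k|\,|g_k|$, so $h^*\ge|a_k|g_k^*$ for every $k$. Integrating $t^{q/p-1}(h^*)^q$ over the windows gives $\norm{h}_{p,q}^q\ge c\sum_k|a_k|^q$.

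\emph{Upper bound.} Take $t\in(\sigma_j,\sigma_{j-1}]$ and $\lambda=\max\{|a_j|g_j^*(t),\max_{k<j}|a_k|g_k^*(2^{-k}t)\}$. Then $d_h(\lambda)\le t+t+\sigma_j\le 3t$, so $h^*(3t)\le\lambda$. Integrating over $t$ gives
\[
\norm{h}_{p,q}^q\le 3^{q/p}\Bigl(\sum_j|a_j|^q\norm{g_j}_{p,q}^q+\sum_k 2^{kq/p}|a_k|^q\norm{g_k^*\one_{(0,\sigma_k)}}_{p,q}^q\Bigr)\lesssim\sum_k|a_k|^q
\]
once $\varepsilon_{k+1}\le 2^{-k/p}$. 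For $q\le p$ the upper bound is in any case immediate from $(\sum_kd_k)^{q/p}\le\sum_kd_k^{q/p}$.

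With Step 2 replaced by this argument, your outline is a complete proof.
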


The following lemma is a direct consequence of Proposition~\ref{prop:disjoint}. We state it separately for later use and omit the proof.
\begin{lemma}\label{lem:disjointdichotomy}
Let $0<p,q<\infty$, and let $(v_n)$ be a bounded pairwise disjoint sequence in $L_{p,q}(0,1)$. After passing to a subsequence, exactly one of the following alternatives occurs:
\begin{enumerate}[label=(\alph*)]
\item $\norm{v_n}_{p,q}\to0$;
\item $(v_n)$ is seminormalized and has a subsequence equivalent to the unit vector basis of $\ell_q$.
\end{enumerate}
\end{lemma}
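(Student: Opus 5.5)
The statement to prove is Lemma~\ref{lem:disjointdichotomy}, and I would derive it from Proposition~\ref{prop:disjoint}.

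\textbf{Reduction.} Since $(v_n)$ is bounded, $\liminf_n\norm{v_n}_{p,q}$ is finite. If it equals $0$, I pass to a subsequence along which $\norm{v_n}_{p,q}\to0$, which is alternative (a). Otherwise there is $c>0$ with $\norm{v_n}_{p,q}\ge c$ for all large $n$. Discarding finitely many terms, $(v_n)$ is then seminormalized, say $c\le\norm{v_n}_{p,q}\le C$. The two alternatives are mutually exclusive: a sequence tending to zero in norm is not seminormalized.

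\textbf{Normalization and the rearrangement limit.} In the seminormalized case I put $f_n=v_n/\norm{v_n}_{p,q}$. These are unit vectors, still pairwise disjoint, and my plan is to check that $f_n^*\to0$ pointwise on $(0,\infty)$.

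Fix $t>0$ and $N\in\mathbb N$. The sets $\{|f_n|>f_n^*(t)/2\}$ are pairwise disjoint subsets of $(0,1)$. Each has measure at least $t$ whenever $f_n^*(t)>0$. Hence fewer than $1/t+1$ indices can have $f_n^*(t)>0$ at all, which already gives $f_n^*(t)=0$ for all large $n$. Everything here lives on $(0,1)$, so the total measure is at most $1$ and $f_n^*$ vanishes on $[1,\infty)$.

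If one wants the argument uniform in $t$, the same counting works: for each fixed $t$, only finitely many $n$ satisfy $m(\supp f_n)\ge t$, so $f_n^*(t)\to0$. Disjointness is exactly what forces this, since disjoint supports in $(0,1)$ have measures summing to at most $1$, so $m(\supp f_n)\to0$.

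\textbf{Applying Proposition~\ref{prop:disjoint}.} The proposition now gives a subsequence $(f_{n_j})$ equivalent to the unit vector basis of $\ell_q$. Because $c\le\norm{v_{n_j}}_{p,q}\le C$, multiplying back by these norms is a diagonal change with coefficients bounded above and below. Therefore $(v_{n_j})$ is also equivalent to the unit vector basis of $\ell_q$. This step uses only the quasi-norm's homogeneity and lattice monotonicity, not local convexity, so it is valid for all $0<p,q<\infty$.

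\textbf{Main obstacle.} The only nontrivial step is verifying the hypothesis $f_n^*\to0$ pointwise. Its key point is $m(\supp v_n)\to0$ for disjoint functions on a probability space, which makes the step straightforward. Everything else is bookkeeping with subsequences.
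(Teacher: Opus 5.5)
Your argument is correct and follows the route the paper intends. The paper omits the proof, calling the lemma a direct consequence of Proposition~\ref{prop:disjoint}, and you supply the missing details: the $\liminf$ dichotomy, the normalization, the observation that disjoint supports in $(0,1)$ force $m(\supp v_n)\to0$ and hence $f_n^*\to0$ pointwise, and the rescaling by norms lying in $[c,C]$. Two small points are only cosmetic: the integer $N$ you fix is never used, and your ``uniform in $t$'' remark does not actually give uniformity, although the pointwise claim it supports is right.
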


\begin{remark}
The conclusion is about a subsequence, not every disjoint sequence. This distinction is important: the measures and heights of a general disjoint family can encode additional weights. The gliding-hump passage removes that irrelevant local information and retains the outer Lorentz exponent $q$.
\end{remark}

\section{A unified nonembedding theorem below $p=2$}
We first prove the subcritical part of Theorem~\ref{thm:main}, namely the range $0<p<2$, $0<q<\infty$, $p\neq q$. The argument is short once the two structural ingredients have been separated, and it makes explicit why $p=2$ is the threshold for this particular mechanism.

\begin{proof}[Proof of the subcritical part of Theorem~\ref{thm:main}]
Assume $0<p<2$, $0<q<\infty$, and $p\neq q$. Assume, towards a contradiction, that $T:\ell_{p,q}\to L_{p,q}(0,1)$ is an isomorphic embedding. Set
\[
f_k=Te_k,\qquad k\ge1.
\]
The sequence $(f_k)$ is seminormalized and equivalent to the symmetric unit vector basis of $\ell_{p,q}$. Hence there is $K\ge1$ such that for every finite scalar family $(a_k)$ and every choice of signs,
\begin{equation}\label{eq:equiv}
K^{-1}\norm{(a_k)}_{\ell_{p,q}}
\le\norm{\sum_k\varepsilon_k a_kf_k}_{p,q}
\le K\norm{(a_k)}_{\ell_{p,q}}.
\end{equation}
Apply Principle~\ref{pr:splitting} and pass to a subsequence. We may write
\[
f_k=u_k+v_k+w_k,
\]
where $u_k^*\le u^*$ for one $u\in L_{p,q}(0,1)$, the $v_k$ are pairwise disjoint, and $w_k\to0$ in norm. By a further passage and the small perturbation principle, $(y_k)$ with $y_k=u_k+v_k$ is equivalent to $(f_k)$ and still satisfies \eqref{eq:equiv}, with a possibly different constant.

Since $p<2$, Proposition~\ref{prop:profile} gives, for any fixed $r>\max\{p,q,2\}$,
\begin{equation}\label{eq:Un}
U_n:=\left(\Ave_\varepsilon\norm{\sum_{k=1}^n\varepsilon_ku_k}_{p,q}^r\right)^{1/r}=o(n^{1/p}).
\end{equation}
On the other hand, \eqref{eq:equiv} and \eqref{eq:fundamental} imply
\begin{equation}\label{eq:Yn}
Y_n:=\left(\Ave_\varepsilon\norm{\sum_{k=1}^n\varepsilon_ky_k}_{p,q}^r\right)^{1/r}\asymp n^{1/p}.
\end{equation}
Use an Aoki--Rolewicz power in the product of the sign probability space and $L_{p,q}$. The reverse estimate \eqref{eq:reverse}, applied to $\sum\varepsilon_ky_k=\sum\varepsilon_ku_k+\sum\varepsilon_kv_k$, combines \eqref{eq:Un}--\eqref{eq:Yn} to give
\begin{equation}\label{eq:Vn}
V_n:=\left(\Ave_\varepsilon\norm{\sum_{k=1}^n\varepsilon_kv_k}_{p,q}^r\right)^{1/r}\asymp n^{1/p}.
\end{equation}
The norm-null alternative in Lemma~\ref{lem:disjointdichotomy} is incompatible with \eqref{eq:Vn} after another small perturbation passage. We may therefore assume that $(v_k)$ is equivalent to the unit vector basis of $\ell_q$. Since that basis is unconditional,
\begin{equation}\label{eq:ellqgrowth}
V_n\asymp\norm{\sum_{k=1}^n e_k}_{\ell_q}=n^{1/q}.
\end{equation}
Equations \eqref{eq:Vn} and \eqref{eq:ellqgrowth} force $n^{1/p}\asymp n^{1/q}$, and hence $p=q$, contrary to the hypothesis.
\end{proof}

\paragraph{Relation to the known range.}
This subcritical part of the main theorem is valid on the full range $0<p<2$ because its decisive common-profile step depends on $p<2$, not on local convexity. In the locally convex sector $1<p<2$, $1\le q<\infty$, $p\neq q$, it recovers previously known nonembedding results. Its genuinely new content lies in the nonlocally convex sector in which $p<1$ or $q<1$. The remaining portion of Theorem~\ref{thm:main}, namely $p\ge2$ with $0<q\le1$, is proved in Section~6.

Two consequences locate the result in the resonant classification picture.

\begin{corollary}
Let $0<p<2$, $0<q<\infty$, and $p\neq q$. Then
\[
L_{p,q}(0,\infty)\hooknot L_{p,q}(0,1)
\]
isomorphically. In particular the two spaces are not isomorphic.
\end{corollary}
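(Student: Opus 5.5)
The plan is to reduce the corollary to the subcritical part of Theorem~\ref{thm:main} by showing that $\ell_{p,q}$ embeds isomorphically into $L_{p,q}(0,\infty)$. Composing such an embedding with a hypothetical embedding $L_{p,q}(0,\infty)\hookrightarrow L_{p,q}(0,1)$ would give an embedding $\ell_{p,q}\hookrightarrow L_{p,q}(0,1)$. For $0<p<2$, $p\neq q$, this contradicts the part of Theorem~\ref{thm:main} proved in the preceding section. The non-isomorphism follows at once, since an isomorphism is in particular an embedding.

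For the embedding $\ell_{p,q}\hookrightarrow L_{p,q}(0,\infty)$, I will use disjoint indicators of unit intervals. Define
\[
S(a_k)=\sum_k a_k\one_{[k-1,k)}.
\]
For a finitely supported sequence $a$, the function $Sa$ takes the value $|a_k|$ on a set of measure one for each $k$. Hence its decreasing rearrangement is the step function $(Sa)^*(t)=a_k^*$ for $t\in[k-1,k)$. This gives
\[
\norm{Sa}_{p,q}^q=\sum_k (a_k^*)^q\int_{k-1}^k t^{q/p-1}\,dt=\frac{p}{q}\sum_k (a_k^*)^q\bigl[k^{q/p}-(k-1)^{q/p}\bigr],
\]
which is, up to the harmless normalization constant, exactly the first expression in \eqref{eq:sequence}. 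So $S$ is an isometry, up to the constant $(p/q)^{1/q}$, on $c_{00}$.

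Since $q<\infty$, $c_{00}$ is dense in $\ell_{p,q}$, and $S$ extends by continuity to an isomorphic embedding on all of $\ell_{p,q}$. Completeness of $L_{p,q}(0,\infty)$ as a quasi-Banach space justifies the extension; no local convexity is needed.

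There is no genuine obstacle; the only point needing care is the exact identification of the rearrangement of $Sa$ with the discrete rearrangement. That is immediate because all level sets have integer measure.
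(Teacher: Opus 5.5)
Your proposal is correct and follows the paper's proof exactly: embed $\ell_{p,q}$ into $L_{p,q}(0,\infty)$ via the disjoint unit-interval indicators (isometric up to normalization), then invoke the subcritical part of Theorem~\ref{thm:main}. Your explicit rearrangement computation simply supplies the detail the paper leaves implicit.
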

\begin{proof}
The map
\[
(a_k)\longmapsto\sum_{k=1}^\infty a_k\one_{(k-1,k]}
\]
is an isomorphic, in fact normalization-dependent isometric, embedding of $\ell_{p,q}$ into $L_{p,q}(0,\infty)$. An embedding of the latter into $L_{p,q}(0,1)$ would contradict Theorem~\ref{thm:main}.
\end{proof}

\begin{corollary}
Let $0<p<2$, $0<q<\infty$, and $p\neq q$. If $S:L_{p,q}(0,\infty)\to L_{p,q}(0,1)$ is bounded, then the restriction of $S$ to the canonical unit-interval copy of $\ell_{p,q}$ is not an isomorphism on any subsequence of its symmetric basis whose closed span is isomorphic to $\ell_{p,q}$.
\end{corollary}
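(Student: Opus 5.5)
Since $p\neq q$ and $0<p<2$, the first Corollary already gives $L_{p,q}(0,\infty)\hooknot L_{p,q}(0,1)$. I will reduce the present statement to the same source of contradiction, namely Theorem~\ref{thm:main} in the subcritical range, applied to a suitable subsequence of the unit-interval basis.

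\textbf{Setting up.} The canonical unit-interval copy of $\ell_{p,q}$ in $L_{p,q}(0,\infty)$ is spanned by $g_k=\one_{(k-1,k]}$. Each $g_k$ is a normalized vector, and $(g_k)$ is isometrically equivalent to the unit vector basis $(e_k)$ of $\ell_{p,q}$; this is the embedding used in the previous corollary. Let $(g_{k_j})$ be any subsequence whose closed span $Z$ is isomorphic to $\ell_{p,q}$. In fact every subsequence qualifies, because $(e_k)$ is symmetric: any subsequence is $1$-equivalent to the whole basis, so the map $e_j\mapsto g_{k_j}$ is an isometry of $\ell_{p,q}$ onto $Z$.

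\textbf{Contradiction.} Suppose, towards a contradiction, that $S|_Z$ is an isomorphism onto its image. Then the composition
\[
\ell_{p,q}\xrightarrow{\;e_j\mapsto g_{k_j}\;}Z\xrightarrow{\;S\;}L_{p,q}(0,1)
\]
is a linear isomorphic embedding of $\ell_{p,q}$ into $L_{p,q}(0,1)$. Its embedding constant is at most $\max\{\norm{S},\norm{(S|_Z)^{-1}}\}$. This contradicts the subcritical part of Theorem~\ref{thm:main}, which is proved for exactly the parameters $0<p<2$, $p\neq q$.

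\textbf{Main point to check.} The only step needing care is that the restriction to the subsequence is still an embedding of all of $\ell_{p,q}$. Here symmetry of the quasi-norm \eqref{eq:sequence} carries the weight. The rearrangement $a^*$ of the coefficient sequence, and hence $\norm{\sum_j a_jg_{k_j}}_{p,q}=\norm{(a_j)}_{\ell_{p,q}}$, does not depend on which disjoint unit intervals carry the coefficients.

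Equivalently, one could rerun the proof of Section~5 directly on $f_j=Sg_{k_j}$. The splitting into a common-profile part, a disjoint part and a norm-null error, together with Proposition~\ref{prop:profile}, again forces $n^{1/p}\asymp n^{1/q}$. This is impossible since $p\neq q$. I expect no genuine obstacle beyond the reduction just described.
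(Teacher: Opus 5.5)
Your proposal is correct and is the paper's argument: the paper's one-line proof says that an isomorphic restriction of $S$ would furnish the embedding forbidden by Theorem~\ref{thm:main}. You make this explicit by composing with $e_j\mapsto\one_{(k_j-1,k_j]}$, which is isometric up to the normalization constant by symmetry. Your remark that every subsequence automatically has closed span isomorphic to $\ell_{p,q}$ is also correct, so the qualifying clause in the statement is automatically satisfied.
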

\begin{proof}
Otherwise that restriction would furnish the embedding forbidden by Theorem~\ref{thm:main}.
\end{proof}

\begin{remark}
The preceding corollary is deliberately stated only on the canonical atomic core. It does not claim a complete classification of all strictly singular operators on quasi-Banach Lorentz spaces. Such a claim would require additional ideal structure not used here.
\end{remark}

\begin{remark}
The restriction $p<2$ is a limitation of the argument in Sections~3--5, not an assertion that an embedding exists for $p\ge2$. In Proposition~\ref{prop:profile} the bounded part of the common profile has size $O(n^{1/2})$. This is $o(n^{1/p})$ precisely when $p<2$; at $p=2$ it is already of the critical size $n^{1/p}$, and for $p>2$ it dominates that scale. Thus the strict estimate $U_n=o(n^{1/p})$ that drives the contradiction is unavailable for $p\ge2$, and that range requires genuinely different arguments. Section~6 supplies one for $p>2$, $0<q\le1$, and a logarithmically weighted variant at the endpoint $p=2$.
\end{remark}

\section{The range $p\ge2$ when $0<q\le1$}
The proof below the quadratic threshold uses the strict asymptotic separation
\[
n^{1/2}=o(n^{1/p})\qquad(p<2).
\]
At $p=2$ this separation disappears, and for $p>2$ its direction is reversed. We therefore use a different argument in the latter range. The splitting principle and the disjoint alternative from Section~4 remain available without change.

\begin{lemma}\label{lem:qsubadd}
Let $0<q\le1$ and $p>q$. For every finite family $(h_j)$ in $L_{p,q}(0,1)$,
\begin{equation}\label{eq:qsubadd}
\norm{\sum_j h_j}_{p,q}^{q}\le C_{p,q}\sum_j\norm{h_j}_{p,q}^{q}.
\end{equation}
Consequently, for $x,z\in L_{p,q}(0,1)$,
\begin{equation}\label{eq:qreverse}
\norm{x}_{p,q}^{q}\le C_{p,q}\bigl(\norm{x+z}_{p,q}^{q}+\norm{z}_{p,q}^{q}\bigr).
\end{equation}
\end{lemma}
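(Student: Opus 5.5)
The plan is to prove the $q$-subadditivity \eqref{eq:qsubadd} by comparing $\norm{\cdot}_{p,q}^q$ with a functional that is genuinely subadditive. Write
\[
\norm{h}_{p,q}^q=\int_0^1 t^{q/p-1}\,(h^*(t))^q\,dt=\int_0^1 (h^*)^q\,d\mu,\qquad d\mu=t^{q/p-1}dt .
\]
Since $p>q$, the weight $w(t)=t^{q/p-1}$ is decreasing. Therefore $\Lambda(g)=\int_0^1 g^*\,w\,dt$ is the Lorentz $\Lambda_w$ functional with decreasing weight, and this functional is subadditive. This is the standard fact that $\int_0^t (g_1+g_2)^*\le\int_0^t g_1^*+\int_0^t g_2^*$ (subadditivity of $g\mapsto\int_0^t g^*$), combined with Hardy's lemma applied to the decreasing weight $w$.

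Apply this to $g=\abs{\sum_j h_j}^q$. Since $0<q\le1$, the scalar inequality $\abs{\sum_j h_j}^q\le\sum_j\abs{h_j}^q$ holds pointwise. By monotonicity of rearrangement, $(\abs{\sum h_j}^q)^*\le(\sum_j\abs{h_j}^q)^*$. Then
\[
\norm{\sum_j h_j}_{p,q}^q=\Lambda\Bigl(\abs{\sum_j h_j}^q\Bigr)\le\Lambda\Bigl(\sum_j\abs{h_j}^q\Bigr)\le\sum_j\Lambda(\abs{h_j}^q)=\sum_j\norm{h_j}_{p,q}^q ,
\]
using $(\abs{h}^q)^*=(h^*)^q$. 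With the exact integral formula \eqref{eq:lorentz}, this gives the result with constant $1$. The constant $C_{p,q}$ only accounts for the omitted normalization factors, or for using an equivalent discrete form of the quasi-norm.

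The single technical point is the Hardy-lemma step. For nonnegative $F,G$ with $\int_0^t F\le\int_0^t G$ for all $t$, and $w$ decreasing and nonnegative, one has $\int Fw\le\int Gw$. I would justify this by writing $w(t)=\int_{(t,\infty)}d(-w)+w(1^-)$ on $(0,1]$ and applying Fubini. The singularity of $w$ at $0$ is handled by monotone convergence, truncating $w$ to $\min\{w,N\}$, which is still decreasing. Apply this with $F=(\sum\abs{h_j}^q)^*$ and $G=\sum(\abs{h_j}^q)^*$. The hypothesis needed is $\int_0^t(\sum_j g_j)^*\le\sum_j\int_0^t g_j^*$, which holds because $\int_0^t g^*=\sup_{m(E)\le t}\int_E\abs{g}$ (measure space $(0,1)$ is nonatomic).

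Finally, \eqref{eq:qreverse} follows by writing $x=(x+z)+(-z)$ and applying \eqref{eq:qsubadd} to the two-term family, using $\norm{-z}_{p,q}=\norm{z}_{p,q}$. I expect no real obstacle. The only places where the hypotheses enter are $q\le1$, for the pointwise concavity inequality, and $p>q$, for the monotonicity of the weight $w$.
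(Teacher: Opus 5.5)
Your proof is correct and follows the paper's route. You reduce via the pointwise bound $\abs{\sum_j h_j}^q\le\sum_j\abs{h_j}^q$ and the identity $\norm{h}_{p,q}^q=\norm{\abs{h}^q}_{p/q,1}$ to the triangle inequality in $L_{p/q,1}$ with $p/q>1$, then obtain \eqref{eq:qreverse} from $x=(x+z)-z$. The paper cites normability of $L_{p/q,1}$ and passes to an equivalent norm, whereas you show directly via Hardy's lemma that $\int_0^1 g^*(t)\,t^{q/p-1}\,dt$ is itself subadditive because the weight is decreasing; this makes your argument self-contained and gives constant $1$.
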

\begin{proof}
Since $0<q\le1$,
\[
\abs{\sum_jh_j}^{q}\le\sum_j|h_j|^{q}.
\]
Moreover, with the normalization used in \eqref{eq:lorentz},
\[
\norm{h}_{p,q}^{q}=\norm{|h|^q}_{p/q,1}.
\]
Because $p/q>1$, the Lorentz space $L_{p/q,1}$ is normable; see, for example, \cite[Chapter~2]{BennettSharpley}. Lattice monotonicity and the triangle inequality in an equivalent norm on $L_{p/q,1}$ therefore yield \eqref{eq:qsubadd}. Applying \eqref{eq:qsubadd} to $x=(x+z)-z$ gives \eqref{eq:qreverse}.
\end{proof}

\begin{lemma}\label{lem:commonprofile6}
Suppose that $(u_k)\subset L_{p,q}(0,1)$ satisfies
\[
u_k^*\le u^*\qquad(k\ge1)
\]
for some $u\in L_{p,q}(0,1)$, where $0<q<\infty$.
\begin{enumerate}[label=(\roman*)]
\item The family $(u_k)$ is uniformly absolutely continuous: if $m(A)\le\delta$, then
\begin{equation}\label{eq:common-uac}
\sup_k\norm{u_k\one_A}_{p,q}\le\norm{u^*\one_{(0,\delta)}}_{p,q}\longrightarrow0\qquad(\delta\downarrow0).
\end{equation}
\item If $p>2$, $0<q\le1$, and $\inf_k\norm{u_k}_{p,q}>0$, then
\begin{equation}\label{eq:L2lower}
\inf_k\norm{u_k}_2>0.
\end{equation}
\end{enumerate}
\end{lemma}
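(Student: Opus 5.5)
For part (i) we compare rearrangements directly. For a measurable set $A$ with $m(A)\le\delta$, the function $u_k\one_A$ is supported on a set of measure at most $\delta$ and satisfies $|u_k\one_A|\le|u_k|$. Hence
\[
(u_k\one_A)^*\le u_k^*\one_{(0,\delta)}\le u^*\one_{(0,\delta)}.
\]
Since the Lorentz quasi-norm is monotone in the rearrangement, this gives the displayed bound uniformly in $k$. Because $q<\infty$ and $u\in L_{p,q}(0,1)$, the integrand $[t^{1/p}u^*(t)]^q t^{-1}$ is integrable. Dominated convergence, applied to the integral over $(0,\delta)$, then shows $\norm{u^*\one_{(0,\delta)}}_{p,q}\to0$ as $\delta\downarrow0$.

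For part (ii) we argue by contradiction. Suppose $\norm{u_{k_j}}_2\to0$ along some subsequence. Truncate at the fixed level $\lambda=1$ and write
\[
u_{k_j}=u_{k_j}\one_{\{|u_{k_j}|>1\}}+u_{k_j}\one_{\{|u_{k_j}|\le1\}}=:x_j+z_j .
\]
By Chebyshev, $\delta_j:=m\{|u_{k_j}|>1\}\le\norm{u_{k_j}}_2^2\to0$. Part (i) then gives $\norm{x_j}_{p,q}\le\norm{u^*\one_{(0,\delta_j)}}_{p,q}\to0$.

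For the bounded piece $z_j$, we have $|z_j|\le1$ and $\norm{z_j}_2\le\norm{u_{k_j}}_2=:\sigma_j$. Chebyshev therefore gives the pointwise bound
\[
z_j^*(t)\le\min\{1,\sigma_j t^{-1/2}\},\qquad 0<t<1.
\]
Split the Lorentz integral at $t_0=\sigma_j^2$. On $(0,t_0)$ use the bound $1$, which contributes about $t_0^{q/p}$. On $(t_0,1)$ use the bound $\sigma_j t^{-1/2}$, which contributes about
\[
\sigma_j^q\int_{t_0}^1 t^{q/p-q/2-1}\,dt\lesssim \sigma_j^q\,t_0^{q/p-q/2}.
\]
Here we use $p>2$: the exponent $q/p-q/2-1$ is less than $-1$, so the integral is dominated by its lower endpoint. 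Both contributions are of order $\sigma_j^{2q/p}$, so
\[
\norm{z_j}_{p,q}^q\lesssim_{p,q}\sigma_j^{2q/p}\to0 .
\]

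To combine the two pieces, invoke the $q$-subadditivity estimate \eqref{eq:qsubadd} of Lemma~\ref{lem:qsubadd}. This applies since $q\le1<2<p$, and it gives
\[
\norm{u_{k_j}}_{p,q}^q\le C_{p,q}\bigl(\norm{x_j}_{p,q}^q+\norm{z_j}_{p,q}^q\bigr)\to0 .
\]
This contradicts $\inf_k\norm{u_k}_{p,q}>0$. An Aoki--Rolewicz renorming \eqref{eq:aoki} would serve equally well for this step.

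The only delicate step is the estimate of the bounded piece. It is exactly there that $p>2$ is needed: the weight $t^{q/p-q/2-1}$ fails to be integrable at $0$, which is what makes the cut at $t_0=\sigma_j^2$ effective. Everything else is a direct consequence of part (i).
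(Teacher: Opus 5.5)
Your proof is correct. Part (i) is the paper's argument, but part (ii) takes a different route.

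\textbf{The paper's route for (ii).} The paper does not truncate. From $\norm{u_k}_2\to0$ along a subsequence, it uses $u_k^*(t)\le t^{-1/2}\norm{u_k}_2$ to get $u_k^*(t)\to0$ for every $t\in(0,1)$. The integrands $[t^{1/p}u_k^*(t)]^q t^{-1}$ are dominated by the integrable function $[t^{1/p}u^*(t)]^q t^{-1}$, so dominated convergence gives $\norm{u_k}_{p,q}\to0$ directly. This argument needs neither part (i) nor any quasi-triangle or $q$-subadditivity inequality. The hypotheses $p>2$ and $q\le1$ enter only through the inclusion $L_{p,q}(0,1)\hookrightarrow L_2(0,1)$.

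\textbf{Your route.} You split into two pieces:
\begin{itemize}
\item a tall piece, controlled by Chebyshev together with part (i); this is the only place you use the common majorant;
\item a bounded piece, controlled by an explicit computation that uses only $|z_j|\le1$ and $\norm{z_j}_2\le\sigma_j$.
\end{itemize}
You then glue the pieces with Lemma~\ref{lem:qsubadd}. This is more laborious. In exchange it yields a quantitative bound, independent of the majorant: $\norm{z}_{p,q}^q\lesssim_{p,q}\norm{z}_2^{2q/p}$ for every $z$ with $|z|\le1$.

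\textbf{A correction to your closing remark.} You say $p>2$ is exactly what is needed in the bounded-piece estimate. It is not. For $p<2$ the integral $\int_{t_0}^1 t^{q/p-q/2-1}\,dt$ is bounded by a constant. For $p=2$ it equals $\log(1/t_0)$. In both cases your split still gives $\norm{z_j}_{p,q}\to0$, with rate $\sigma_j^{q}$ or $\sigma_j^{q}\log(1/\sigma_j)$ in place of $\sigma_j^{2q/p}$. The condition $p>2$ only decides which endpoint dominates the integral, not whether the argument works.
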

\begin{proof}
For (i), the support of $u_k\one_A$ has measure at most $\delta$, and rearrangement monotonicity gives
\[
(|u_k|\one_A)^*(t)\le u_k^*(t)\one_{(0,\delta)}(t)\le u^*(t)\one_{(0,\delta)}(t).
\]
Thus \eqref{eq:common-uac} follows from the order continuity of $L_{p,q}(0,1)$.

For (ii), the standard Lorentz inclusions on a finite measure space give
\[
L_{p,q}(0,1)\hookrightarrow L_p(0,1)\hookrightarrow L_2(0,1),
\]
since $q\le p$ and $p>2$; see \cite[Chapter~2]{BennettSharpley}. Suppose that a subsequence satisfies $\norm{u_k}_2\to0$. Then, for each $t\in(0,1)$,
\[
u_k^*(t)\le t^{-1/2}\norm{u_k}_2\longrightarrow0.
\]
Together with $u_k^*\le u^*$, dominated convergence in the defining Lorentz integral gives $\norm{u_k}_{p,q}\to0$, a contradiction.
\end{proof}

\begin{lemma}\label{lem:hilbertgrowth}
Let $(x_k)$ be a bounded sequence in a Hilbert space $H$ such that
\[
\inf_k\norm{x_k}_H>0.
\]
Then some subsequence, still denoted by $(x_k)$, satisfies
\begin{equation}\label{eq:hilbertgrowth}
\norm{\sum_{k=1}^n x_k}_H\ge cn^{1/2},\qquad n\ge1,
\end{equation}
for some $c>0$.
\end{lemma}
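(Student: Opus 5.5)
We pass to a weakly convergent subsequence and treat the weak limit separately. Since $H$ is a Hilbert space and $(x_k)$ is bounded, some subsequence converges weakly to some $x\in H$. Put $d_k=x_k-x$, so that $d_k\to0$ weakly and $\sup_k\norm{d_k}_H<\infty$. Writing $\delta=\inf_k\norm{x_k}_H>0$, we split into two cases according to the size of $x$ relative to the residual norms $\norm{d_k}_H$.

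\emph{Case 1: $x\neq0$.} Expand
\[
\norm{\sum_{k=1}^n x_k}_H^2=n^2\norm{x}_H^2+2n\,\mathrm{Re}\Bigl\langle x,\sum_{k=1}^n d_k\Bigr\rangle+\norm{\sum_{k=1}^n d_k}_H^2 .
\]
Weak nullity gives $\langle x,d_k\rangle\to0$. Passing to a subsequence, we may assume $\sum_k|\langle x,d_k\rangle|\le\norm{x}_H^2/4$. Then the middle term is at least $-n\norm{x}_H^2/2$, and the last term is nonnegative. Hence $\norm{\sum_{k=1}^n x_k}_H^2\ge n^2\norm{x}_H^2-n\norm{x}_H^2/2\ge n\norm{x}_H^2/2$, which yields \eqref{eq:hilbertgrowth} with $c=\norm{x}_H/\sqrt2$. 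This case in fact gives linear growth, which is more than needed.

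\emph{Case 2: $x=0$.} Now $x_k\to0$ weakly, and $\norm{x_k}_H\ge\delta$. Use the standard gliding-hump selection: choose indices inductively so that $|\langle x_{k_j},x_{k_i}\rangle|\le\delta^2 2^{-i-j}$ for $i<j$. This is possible because, for the finitely many already chosen vectors, $\langle x_k,x_{k_i}\rangle\to0$ as $k\to\infty$. Then
\[
\norm{\sum_{j=1}^n x_{k_j}}_H^2\ge\sum_{j=1}^n\norm{x_{k_j}}_H^2-2\sum_{i<j}|\langle x_{k_i},x_{k_j}\rangle|\ge n\delta^2-\delta^2,
\]
which is at least $n\delta^2/2$ for $n\ge2$. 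For $n=1$ the bound holds trivially with constant $\delta$. Taking $c=\delta/\sqrt2$ proves \eqref{eq:hilbertgrowth}.

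The only delicate point is that the two cases must not be mixed. A nonzero weak limit could partly cancel the residuals if one simply summed everything, so the Case 1 estimate keeps the cross term controlled by summability rather than by orthogonality. Otherwise the argument is routine. One could also unify the cases by first arranging near-orthogonality of the $d_k$ as in Case 2, and then noting that $\norm{nx+\sum d_k}^2\ge n^2\norm{x}^2+\sum\norm{d_k}^2-o(n)$. At least one of $\norm{x}_H$ and $\liminf\norm{d_k}_H$ is bounded below, since $\norm{x_k}_H\le\norm{x}_H+\norm{d_k}_H$, and this gives the $n^{1/2}$ rate in either situation.
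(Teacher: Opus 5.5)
Your proof is correct and follows the paper's own route: pass to a weakly convergent subsequence, get (even linear) growth from a nonzero weak limit, and use a gliding-hump selection with summable off-diagonal inner products when the limit is zero, with all the details the paper leaves implicit checked correctly. One small slip is confined to your optional closing remark. After your summability arrangement the cross term $2n\,\mathrm{Re}\langle x,\sum_{k\le n} d_k\rangle$ is only $O(n)$ with a small constant, not $o(n)$; this is harmless because it is absorbed by $n^2\norm{x}_H^2$ when $x\neq0$ and vanishes when $x=0$.
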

\begin{proof}
This is the standard Hilbert-space subsequence argument. After passing to a weakly convergent subsequence $x_k\rightharpoonup x$, a nonzero weak limit gives even linear growth in the direction $x$. If $x=0$, a gliding-hump selection makes the off-diagonal inner products summable, and the polarization identity then gives $\norm{\sum_{k=1}^n x_k}_H^2\gtrsim n$. We record only the consequence \eqref{eq:hilbertgrowth} needed below.
\end{proof}

\begin{theorem}\label{thm:above2}
Let
\[
p>2,\qquad0<q\le1.
\]
Then
\[
\ell_{p,q}\hooknot L_{p,q}(0,1)
\]
isomorphically.
\end{theorem}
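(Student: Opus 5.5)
Suppose, towards a contradiction, that $T:\ell_{p,q}\to L_{p,q}(0,1)$ is an isomorphic embedding. As in Section~5, I would set $f_k=Te_k$, so that \eqref{eq:equiv} holds. Principle~\ref{pr:splitting} then lets us write, after passing to a subsequence,
\[
y_k=u_k+v_k\sim f_k ,
\]
where $u_k^*\le u^*$ and the $v_k$ are pairwise disjoint. The proof then splits into two branches according to the common-profile part.

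\emph{Branch 1: $\norm{u_k}_{p,q}\to0$ along a subsequence.} Here I would absorb $u_k$ into the perturbation. Pass to a subsequence with summable $\rho$-powers and apply the small perturbation principle \eqref{eq:smallpert}. Then $(v_k)$ is equivalent to $(e_k)$, hence seminormalized. By Lemma~\ref{lem:disjointdichotomy}(b), a further subsequence is equivalent to the $\ell_q$ basis. This gives $n^{1/p}\asymp n^{1/q}$, contradicting $p\ne q$.

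\emph{Branch 2: $\inf_k\norm{u_k}_{p,q}>0$.} Lemma~\ref{lem:commonprofile6}(ii) gives $\inf_k\norm{u_k}_2>0$, and the inclusion $L_{p,q}\hookrightarrow L_2$ (valid since $q\le p$) keeps $(u_k)$ bounded in $L_2$. I would then try to bound $\norm{\sum_{k=1}^n\varepsilon_k u_k}_{p,q}$ from below by something of order $n^{1/2}$ times a quantity that beats $n^{1/p}$. The key observation is that for $p>2$ the $\ell_{p,q}$ upper estimate gives
\[
\norm{\sum_{k\le n}\varepsilon_ky_k}_{p,q}\le Kn^{1/p}=o(n^{1/2}),
\]
while Khintchine/Hilbert-space considerations should force the $u$-part to have size at least $n^{1/2}$. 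Concretely, Lemma~\ref{lem:hilbertgrowth} applied with the signs $\varepsilon_k$ built in gives, for a subsequence,
\[
\norm{\sum\varepsilon_ku_k}_2\ge cn^{1/2}.
\]
The simplest version averages over signs, where orthogonality in $L_2(\text{signs}\times(0,1))$ gives $\Ave\norm{\sum\varepsilon_ku_k}_2^2=\sum\norm{u_k}_2^2\gtrsim n$.

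The difficulty is that the $L_2$ norm is dominated by the $L_{p,q}$ norm, so to reach a contradiction I have to transfer the lower $L_2$ bound through $y_k=u_k+v_k$. I would use the disjointness of $(v_k)$ to handle this. First, restrict to the set $E_n$ where the $u$-part lives in an essential sense. Lemma~\ref{lem:commonprofile6}(i) and uniform absolute continuity show that the $v_k$, being disjoint, eventually have supports of small measure, so their contribution to each fixed large-measure region vanishes. Then, on the complement of $\bigcup_k\supp v_k$ (after a gliding-hump selection making the measures of the $\supp v_k$ summable and small), \eqref{eq:qreverse} lets me bound $\norm{\sum\varepsilon_ku_k\one_{G}}^q_{p,q}$ above by a constant times $\norm{\sum\varepsilon_ky_k}^q_{p,q}$ plus an error controlled by uniform absolute continuity of the $u_k$ on the small set. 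This is where $q\le1$ and $q$-subadditivity replace the triangle inequality. On the good set $G$, the $L_2$ lower bound survives up to a small loss, by (i) and Cauchy--Schwarz-type control of $\norm{u_k\one_{G^c}}_2$. Combining the pieces gives
\[
cn^{1/2}\le\Bigl(\Ave\norm{\sum\varepsilon_ku_k\one_G}_2^2\Bigr)^{1/2}\lesssim\Bigl(\Ave\norm{\sum\varepsilon_ku_k\one_G}_{p,q}^{2}\Bigr)^{1/2}\lesssim n^{1/p}+\text{error},
\]
which is impossible for $p>2$.

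The main obstacle is controlling the error from $u_k$ on $\bigcup\supp v_k$ uniformly in $n$. It is a sum of $n$ terms, so I would need the measure of $\supp v_k$ to decay fast enough that $q$-subadditivity gives a bounded total $\sum_k\norm{u_k\one_{\supp v_k}}_{p,q}^q<\infty$. Uniform absolute continuity via $\norm{u^*\one_{(0,\delta_k)}}_{p,q}$ permits exactly this choice of $\delta_k$ along a subsequence. I would first have to check that the supports of the $v_k$ can indeed be taken of measure tending to zero, passing to a subsequence or moving mass into $u_k$ if needed.
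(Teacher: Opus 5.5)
Your proof is correct, but you run the dichotomy on the common-profile part, while the paper runs it on the disjoint part via Lemma~\ref{lem:disjointdichotomy}.

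\textbf{The paper's route.} If $\norm{v_k}_{p,q}\to0$, then $(u_k)\sim(e_k)$, and Lemma~\ref{lem:hilbertgrowth} sets $\norm{\sum_{k\le n}u_k}_2\ge cn^{1/2}$ against $n^{1/p}$; this is essentially your Branch~2 without localization. If instead $(v_k)$ is equivalent to the $\ell_q$ basis, the paper localizes \emph{onto} the small tail union $A^{(N)}=\bigcup_{k\ge N}\supp v_k$. There, uniform absolute continuity and $q$-subadditivity (Lemma~\ref{lem:qsubadd}) make the common-profile part negligible. This gives the lower $\ell_q$-estimate \eqref{eq:taildom} for $(y_k)$, which is incompatible with $q<p$.

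\textbf{Your route.} You dispose of the case $\norm{u_k}_{p,q}\to0$ by the Section~5 endgame applied to $(v_k)$. In the other branch you localize \emph{away} from $A^{(N)}$, onto $G=(0,1)\setminus A^{(N)}$. There your announced ``main obstacle'' does not arise. For $k\ge N$ one has $y_k\one_G=u_k\one_G$ exactly, so lattice monotonicity gives $\norm{\sum\varepsilon_ku_k\one_G}_{p,q}\le\norm{\sum\varepsilon_ky_k}_{p,q}\le Kn^{1/p}$ with no error term at all. The only smallness you need is the single uniform bound $\sup_k\norm{u_k\one_{G^c}}_2\le\norm{u^*\one_{(0,m(G^c))}}_2$, which holds because $u\in L_{p,q}(0,1)\subset L_2(0,1)$. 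That $m(\supp v_k)\to0$ is automatic for disjoint subsets of $(0,1)$. So you need no sum of $n$ error terms and no $q$-subadditivity. You do not even need Lemma~\ref{lem:hilbertgrowth}, since sign-averaged orthogonality $\Ave\norm{\sum\varepsilon_kg_k}_2^2=\sum\norm{g_k}_2^2$ suffices.

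\textbf{What each buys.} Your argument never uses $q\le1$. For $p>2$, the inclusion $L_{p,q}(0,1)\hookrightarrow L_{p,\infty}(0,1)\hookrightarrow L_2(0,1)$ and the proof of Lemma~\ref{lem:commonprofile6}(ii) work for every $q<\infty$. So the same argument gives $\ell_{p,q}\hooknot L_{p,q}(0,1)$ for all $p>2$ and $q\ne p$, including the already known Banach range. The paper's second case uses $q\le1$ through Lemma~\ref{lem:qsubadd}. In exchange it extracts more structure: an $\ell_q$ lower estimate for the whole image sequence on a tail.
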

\begin{proof}
Assume that $T:\ell_{p,q}\to L_{p,q}(0,1)$ is an isomorphic embedding and put $f_k=Te_k$. Apply Principle~\ref{pr:splitting}. After passing to a subsequence and removing the norm-null perturbation by the small perturbation principle, we may write
\begin{equation}\label{eq:yk6}
y_k=u_k+v_k,
\end{equation}
where $(y_k)$ is equivalent to the canonical basis of $\ell_{p,q}$, the functions $v_k$ are pairwise disjointly supported, and
\[
u_k^*\le u^*\qquad(k\ge1)
\]
for one fixed $u\in L_{p,q}(0,1)$. In particular,
\begin{equation}\label{eq:sumyk6}
\norm{\sum_{k=1}^n y_k}_{p,q}\asymp n^{1/p}.
\end{equation}

Apply Lemma~\ref{lem:disjointdichotomy} to $(v_k)$.

\emph{Case 1: $\norm{v_k}_{p,q}\to0$.} Pass to a further subsequence so rapidly that the small perturbation principle applies to $u_k=y_k-v_k$. Then $(u_k)$ is equivalent to $(y_k)$ and hence to the canonical basis of $\ell_{p,q}$. Thus $(u_k)$ is seminormalized in $L_{p,q}$, and Lemma~\ref{lem:commonprofile6}(ii) yields
\[
\inf_k\norm{u_k}_2>0.
\]
The inclusion $L_{p,q}(0,1)\hookrightarrow L_2(0,1)$ also makes $(u_k)$ bounded in $L_2$. By Lemma~\ref{lem:hilbertgrowth}, after another passage,
\[
\norm{\sum_{k=1}^n u_k}_2\ge cn^{1/2}.
\]
On the other hand, equivalence with the symmetric basis of $\ell_{p,q}$ and the continuous inclusion into $L_2$ give
\[
cn^{1/2}\le\norm{\sum_{k=1}^n u_k}_2
\le C\norm{\sum_{k=1}^n u_k}_{p,q}
\le C'n^{1/p},
\]
which is impossible because $p>2$.

\emph{Case 2: $(v_k)$ is seminormalized and equivalent, after passage, to the canonical basis of $\ell_q$.} Let $A_k=\supp v_k$ and
\[
A^{(N)}=\bigcup_{k\ge N}A_k.
\]
Since the $A_k$ are pairwise disjoint subsets of $(0,1)$, $m(A^{(N)})\to0$. By Lemma~\ref{lem:commonprofile6}(i), given $\eta>0$ we may choose $N$ so large that
\begin{equation}\label{eq:tailA}
\sup_j\norm{u_j\one_{A^{(N)}}}_{p,q}<\eta.
\end{equation}
For finitely supported scalars $(a_k)$ with support in $\{k\ge N\}$, set
\[
V=\sum_k a_kv_k,\qquad U_A=\sum_k a_ku_k\one_{A^{(N)}}.
\]
The equivalence of $(v_k)$ with the $\ell_q$ basis gives
\begin{equation}\label{eq:Vellq}
\norm{V}_{p,q}\ge c_0\norm{(a_k)}_{\ell_q}.
\end{equation}
By Lemma~\ref{lem:qsubadd} and \eqref{eq:tailA},
\begin{equation}\label{eq:UA}
\norm{U_A}_{p,q}^{q}\le C_{p,q}\eta^q\sum_k|a_k|^q.
\end{equation}
Moreover,
\[
\left(\sum_k a_ky_k\right)\one_{A^{(N)}}=V+U_A.
\]
Using \eqref{eq:qreverse}, \eqref{eq:Vellq}, and \eqref{eq:UA}, and then taking $\eta$ sufficiently small, we obtain
\[
\norm{V+U_A}_{p,q}\ge c_1\norm{(a_k)}_{\ell_q}.
\]
Lattice monotonicity therefore yields
\begin{equation}\label{eq:taildom}
\norm{\sum_k a_ky_k}_{p,q}\ge c_1\norm{(a_k)}_{\ell_q}.
\end{equation}
Since $(y_k)$ is equivalent to the $\ell_{p,q}$ basis, \eqref{eq:taildom} implies
\[
\norm{(a_k)}_{\ell_{p,q}}\gtrsim\norm{(a_k)}_{\ell_q}
\]
on a tail. Taking $a_k=1$ on $n$ consecutive indices gives
\[
n^{1/p}\gtrsim n^{1/q},
\]
which is impossible because $q\le1<p$.

Both alternatives are impossible, so the embedding does not exist.
\end{proof}

\begin{corollary}\label{cor:above2infty}
Let $p>2$ and $0<q\le1$. Then
\[
L_{p,q}(0,\infty)\hooknot L_{p,q}(0,1)
\]
isomorphically. In particular, the two spaces are not isomorphic.
\end{corollary}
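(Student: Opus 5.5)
The plan is to argue exactly as in the first corollary of Section~5, using Theorem~\ref{thm:above2} in place of the subcritical part of Theorem~\ref{thm:main}. Assume, towards a contradiction, that there is an isomorphic embedding $S:L_{p,q}(0,\infty)\to L_{p,q}(0,1)$ with $p>2$ and $0<q\le1$.

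Next, consider the canonical map
\[
J:(a_k)\longmapsto\sum_{k=1}^\infty a_k\one_{(k-1,k]}
\]
from $\ell_{p,q}$ into $L_{p,q}(0,\infty)$. Because the intervals $(k-1,k]$ are disjoint and all have measure one, the function $J(a)$ has rearrangement
\[
J(a)^*=\sum_k a_k^*\one_{(k-1,k]}.
\]
Inserting this into \eqref{eq:lorentz} gives
\[
\norm{J(a)}_{p,q}^q=\sum_k (a_k^*)^q\int_{k-1}^k t^{q/p-1}\,dt=\tfrac{p}{q}\sum_k (a_k^*)^q\bigl[k^{q/p}-(k-1)^{q/p}\bigr].
\]
Up to the normalization factor, this is exactly \eqref{eq:sequence}. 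Hence $J$ is an isomorphic embedding, and in fact an isometry with the normalizations the paper has agreed to ignore. This computation is valid for every $0<p,q<\infty$, so no local convexity is needed.

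The composition $S\circ J:\ell_{p,q}\to L_{p,q}(0,1)$ is then a linear isomorphic embedding, with embedding constant at most $C(S)C(J)$. This contradicts Theorem~\ref{thm:above2}. Finally, if $L_{p,q}(0,\infty)$ and $L_{p,q}(0,1)$ were isomorphic, the isomorphism would itself be an embedding of this kind, so the two spaces are not isomorphic.

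There is no real obstacle in this argument. The only point needing a check is the rearrangement identity for $J(a)$, and it follows directly from the disjointness of the unit intervals.
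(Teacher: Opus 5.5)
Your proposal is correct and is essentially the paper's proof: the paper likewise composes the canonical disjoint-indicator embedding $(a_k)\mapsto\sum_k a_k\one_{(k-1,k]}$ of $\ell_{p,q}$ into $L_{p,q}(0,\infty)$ with the hypothetical embedding and invokes Theorem~\ref{thm:above2}. Your rearrangement computation shows that this map is an isometry up to the factor $(p/q)^{1/q}$ for all $0<p,q<\infty$, which supplies the detail the paper leaves implicit.
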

\begin{proof}
The canonical disjoint-indicator map embeds $\ell_{p,q}$ isomorphically into $L_{p,q}(0,\infty)$. An embedding of $L_{p,q}(0,\infty)$ into $L_{p,q}(0,1)$ would contradict Theorem~\ref{thm:above2}.
\end{proof}

\subsection{The case $p=2$}
The argument above deliberately separates $p>2$ from $p=2$. At $p=2$ the Hilbert-space growth in Case~1 is of the same order as the fundamental function of $\ell_{2,q}$, so an unweighted partial-sum argument cannot exclude the common-profile branch. The endpoint is nevertheless recoverable by using the critical coefficient sequence $(k^{-1/2})$. Put
\[
H_n=\sum_{k=1}^n\frac1k\asymp\log(en).
\]
By \eqref{eq:sequence}, for every choice of signs,
\[
\norm{\sum_{k=1}^n\frac{\varepsilon_k}{\sqrt{k}}e_k}_{\ell_{2,q}}\asymp_q H_n^{1/q}.
\]
The essential point is that a common-profile family has strictly smaller weighted Rademacher growth.

We first record the elementary shell estimate that creates the logarithmic gain.

\begin{lemma}\label{lem:logshell}
Let $0<s<1$. Suppose that $y\ge0$ is measurable and $a\le y\le b$ on $\supp y$, where $0<a\le b<\infty$. Then
\begin{equation}\label{eq:logshell}
\norm{y}_{1,s}^{s}\le C_s\bigl(1+\log(b/a)\bigr)^{1-s}\norm{y}_1^{s}.
\end{equation}
\end{lemma}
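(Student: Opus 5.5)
The plan is to work directly with the defining integral
\[
\norm{y}_{1,s}^{s}=\int_0^\infty \bigl(t\,y^*(t)\bigr)^{s}\,\frac{dt}{t},
\]
split the range of $t$ at a single cut point, and handle the two pieces differently: a trivial bound near $t=0$ and H\"older's inequality away from it.

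\emph{Setup.} First I dispose of trivial cases. If $\norm{y}_1=\infty$ there is nothing to prove. Otherwise put $m=m(\supp y)$. Since $y\ge a$ on its support, $am\le\norm{y}_1<\infty$, so $m$ is finite. Then $y^*$ vanishes on $(m,\infty)$ and satisfies $a\le y^*\le b$ on $(0,m)$. In particular the two bounds I will use are
\[
\norm{y}_1=\int_0^m y^*(t)\,dt\ge am
\qquad\text{and}\qquad
t\,y^*(t)\le bt .
\]

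\emph{Splitting.} I cut $(0,m)$ at $\tau=ma/b\le m$.

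On the initial piece $(0,\tau)$ I use the pointwise bound $t\,y^*(t)\le bt$:
\[
\int_0^\tau (bt)^s\,\frac{dt}{t}=\frac{(b\tau)^s}{s}=\frac{(am)^s}{s}\le\frac1s\,\norm{y}_1^{s}.
\]

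On the remaining piece $(\tau,m)$ I apply H\"older's inequality with respect to the measure $dt/t$, with exponents $1/s$ and $1/(1-s)$:
\[
\int_\tau^m \bigl(t\,y^*(t)\bigr)^s\,\frac{dt}{t}
\le\Bigl(\int_\tau^m t\,y^*(t)\,\frac{dt}{t}\Bigr)^{s}\Bigl(\int_\tau^m\frac{dt}{t}\Bigr)^{1-s}
\le\norm{y}_1^{s}\,\bigl(\log(b/a)\bigr)^{1-s}.
\]

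\emph{Conclusion.} Adding the two pieces gives
\[
\norm{y}_{1,s}^{s}\le\Bigl(\frac1s+\bigl(\log(b/a)\bigr)^{1-s}\Bigr)\norm{y}_1^{s}\le C_s\bigl(1+\log(b/a)\bigr)^{1-s}\norm{y}_1^{s},
\]
which is \eqref{eq:logshell}.

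The only delicate point is the choice of $\tau$. H\"older's inequality alone fails because $\int_0^m dt/t$ diverges. The lower bound $a$ on $y$ is what makes $\tau=ma/b$ work: the initial piece is controlled by $am\le\norm{y}_1$, and the remaining logarithmic length is exactly $\log(b/a)$.
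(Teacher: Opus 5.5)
Your proof is correct, and it takes a genuinely different route from the paper's. The paper slices the \emph{range} of $y$ into dyadic level shells $A_j=\{2^ja<y\le 2^{j+1}a\}$, of which there are $N\lesssim 1+\log(b/a)$, and dominates each slice by $2^{j+1}a\one_{A_j}$. It then uses the $s$-subadditivity (up to a constant $C_s$) of the $L_{1,s}$ quasi-norm; this rests on $\norm{h}_{1,s}^s=\norm{|h|^s}_{1/s,1}$ and the normability of $L_{1/s,1}$. It closes with the discrete H\"older inequality $\sum_j\alpha_j^s\le N^{1-s}\bigl(\sum_j\alpha_j\bigr)^s$ and $\sum_j\alpha_j\le\norm{y}_1$. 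You instead work in the \emph{domain} of $y^*$. Your logarithm is the $dt/t$-length of $(ma/b,m)$ rather than a count of shells. The initial interval is absorbed by $ty^*(t)\le bt$ together with $am\le\norm{y}_1$, and one continuous H\"older inequality in $dt/t$ does the rest. Your argument is self-contained and needs no quasi-triangle inequality or normability input. It also gives the explicit bound $\bigl(\frac1s+(\log(b/a))^{1-s}\bigr)\norm{y}_1^s$, so one may take $C_s=1+1/s$. The paper's shell decomposition has the advantage of being the same dyadic level-set slicing it reuses in Proposition~\ref{prop:criticalprofile}, which keeps its toolkit uniform. For this lemma in isolation, yours is the cleaner proof.
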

\begin{proof}
Let
\[
A_j=\{2^ja<y\le2^{j+1}a\}
\]
for the nonempty shells between $a$ and $b$, and put $\alpha_j=2^ja\,m(A_j)$. There are at most $N\le C(1+\log(b/a))$ such shells. Since $y\one_{A_j}\le2^{j+1}a\one_{A_j}$, the definition of $L_{1,s}$ and the $s$-subadditivity of its quasi-norm give
\[
\norm{y}_{1,s}^{s}\le C_s\sum_j\alpha_j^s
\le C_sN^{1-s}\left(\sum_j\alpha_j\right)^s.
\]
Moreover, $\sum_j\alpha_j\le\norm{y}_1$, proving \eqref{eq:logshell}.
\end{proof}

\begin{lemma}\label{lem:boundedsupport}
Let $0<q\le1$, $|h_k|\le M$, and $m(\supp h_k)\le t$ for $1\le k\le n$. Then
\begin{equation}\label{eq:boundedsupport}
\norm{\left(\sum_{k=1}^n\frac{|h_k|^2}{k}\right)^{1/2}}_{2,q}^{q}
\le C_qM^qt^{q/2}H_n.
\end{equation}
\end{lemma}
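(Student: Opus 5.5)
We pass to the squared function and work in $L_{1,q/2}$. Put
\[
Y=\sum_{k=1}^n\frac{|h_k|^2}{k},
\]
so that, with the paper's normalization, $\norm{Y^{1/2}}_{2,q}^{q}=\norm{Y}_{1,q/2}^{q/2}$. It therefore suffices to show
\[
\norm{Y}_{1,s}^{s}\le C_qM^{q}t^{q/2}H_n,\qquad s=q/2\in(0,1/2].
\]
The natural tool is the $s$-subadditivity of the $L_{1,s}$ quasi-norm. Since $s<1$ and $1>s$, Lemma~\ref{lem:qsubadd} applies with $(p,q)$ replaced by $(1,s)$ and gives $\norm{\sum_k g_k}_{1,s}^{s}\le C\sum_k\norm{g_k}_{1,s}^{s}$.

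\textbf{Step 1.} We apply this inequality to the summands $g_k=|h_k|^2/k$. Each $g_k$ satisfies $0\le g_k\le M^2/k$ and is supported on a set of measure at most $t$. Hence $g_k^*\le (M^2/k)\one_{(0,t)}$, and lattice monotonicity gives
\[
\norm{g_k}_{1,s}^{s}\le (M^2/k)^{s}\norm{\one_{(0,t)}}_{1,s}^{s}\asymp_s M^{2s}k^{-s}t^{s}=M^{q}t^{q/2}k^{-q/2}.
\]

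\textbf{Step 2.} Summing over $k$ gives
\[
\norm{Y}_{1,s}^{s}\le C_q M^{q}t^{q/2}\sum_{k=1}^n k^{-q/2}.
\]
For $q<2$ this sum is of order $n^{1-q/2}$, not $H_n$, so the crude termwise bound is too lossy. This is the main obstacle, and we must exploit that the $g_k$ sit at geometrically different height scales.

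\textbf{Step 3.} To recover $H_n$, we group the indices dyadically, $I_j=\{2^j\le k<2^{j+1}\}$, for $j=0,\dots,\log_2 n$. We then use $s$-subadditivity only across the $O(\log n)\asymp H_n$ blocks:
\[
\norm{Y}_{1,s}^{s}\le C\sum_j\norm{Y_j}_{1,s}^{s},\qquad Y_j=\sum_{k\in I_j}g_k.
\]
For a single block we need $\norm{Y_j}_{1,s}^{s}\lesssim M^{q}t^{q/2}$, uniformly in $j$. The block satisfies $Y_j\le \sum_{k\in I_j}M^2 2^{-j}\one_{\supp h_k}$, so $\norm{Y_j}_\infty\le 2M^2$ and $\norm{Y_j}_1\le M^2t$.

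This is where the difficulty concentrates. Knowing only $\norm{Y_j}_\infty\le 2M^2$ and $\norm{Y_j}_1\le M^2 t$ is not enough: the $L_{1,s}$ quasi-norm with $s<1$ can exceed the $L_1$ norm when the distribution is spread out, and $\supp Y_j$ may have measure up to $2^jt$. The remedy is to decompose $Y_j$ further by height into level shells, in the spirit of Lemma~\ref{lem:logshell}. The part of $Y_j$ below the height $M^2 2^{-j}$ gives no contribution beyond a single shell. The shells between $M^22^{-j}$ and $2M^2$ number about $j$, and each is controlled by the $L_1$ mass through the counting bound. Combining these with Lemma~\ref{lem:logshell}, the bounds on $\norm{Y_j}_\infty$ and $\norm{Y_j}_1$ should give a per-block contribution of order $M^qt^{q/2}$ times a factor we expect to be absorbed by reorganizing the sum over $j$.

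\textbf{Step 4.} We then sum over the blocks and compare with $H_n\asymp\log(en)$. If the dyadic bookkeeping in Step 3 loses only a bounded factor per scale, the final bound is $C_qM^qt^{q/2}H_n$ as claimed. This step has not been verified: it is the main obstacle, and a single logarithmic count of shells across all of $Y$ (height range from $M^2/n$ to $M^2H_n$) is the alternative to try if the blockwise estimate fails to close.
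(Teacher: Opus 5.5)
\textbf{Verdict: genuine gap.} Your proposal does not prove the lemma. Steps~1--2 are too lossy, as you note yourself. The blockwise route of Steps~3--4 cannot close, because the uniform per-block bound $\norm{Y_j}_{1,s}^{s}\lesssim M^qt^{q/2}$ that you say you need is false.

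\textbf{Counterexample to the per-block bound.} Take $M=1$, $2^{j+1}t\le1$, $c=t/(j+1)$, and disjoint sets $E_0,\dots,E_j$ with $m(E_i)=c2^i$. Cut $E_i$ into $2^i$ pieces of measure $c$. On the $\ell$-th piece set $h_k=1$ for those $k\in I_j$ with $k\equiv\ell\pmod{2^i}$. Then:
\begin{itemize}
\item each $h_k$ has support of measure exactly $(j+1)c=t$;
\item $Y_j\in(2^{-i-1},2^{-i}]$ on $E_i$;
\item consequently $\norm{Y_j}_{1,s}^{s}\gtrsim\sum_{i=0}^{j}c^s\asymp t^s(j+1)^{1-s}$.
\end{itemize}
This is exactly the loss Lemma~\ref{lem:logshell} predicts for a single block: about $j$ shells against $L_1$ mass only $t$. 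Different blocks use disjoint index sets, so for small $t$ this extremal behaviour can occur in every block at once. Your bound $\sum_{j\le\log_2 n}\norm{Y_j}_{1,s}^{s}$ is then of order $t^s(\log n)^{2-s}$, which exceeds $t^sH_n$ by a factor $(\log n)^{1-s}$. No reorganization of the sum over $j$ rescues this once you have applied $s$-subadditivity across blocks. The H\"older loss $N^{1-s}$ has already been paid separately in each block against mass $t$ alone.

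\textbf{The missing idea.} It is the one you list only as a fallback, and it is precisely the paper's proof: do not split into blocks at all. With $M=1$, the whole function satisfies $Y\le H_n\le n$, $\norm{Y}_1\le tH_n$ and $m(\supp Y)\le\min\{1,nt\}$.
\begin{itemize}
\item The low part $Y\one_{\{Y\le 1/n\}}$ is at most $1/n$ on a set of measure at most $nt$, so it contributes $\lesssim n^{-s}(nt)^s=t^s$.
\item The high part takes values in $(1/n,n]$. Lemma~\ref{lem:logshell} applies with $\log(b/a)\lesssim\log(en)$ and gives $\lesssim(\log(en))^{1-s}(tH_n)^s\asymp t^sH_n$.
\end{itemize}
Here the single H\"older factor $(\log n)^{1-s}$ is paid once, against the full mass $tH_n$, and $(\log n)^{1-s}H_n^{s}\asymp H_n$. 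Combine the two parts by $s$-subadditivity of $L_{1,s}$ and use $\norm{Y^{1/2}}_{2,q}^{q}=\norm{Y}_{1,s}^{s}$ to obtain the claimed bound.
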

\begin{proof}
By homogeneity take $M=1$, set $s=q/2$, and
\[
X=\sum_{k=1}^n\frac{|h_k|^2}{k}.
\]
Then $0\le X\le H_n\le n$, $\norm{X}_1\le tH_n$, and $m(\supp X)\le\min\{1,nt\}$. Split
\[
y=X\one_{\{X>1/n\}},\qquad z=X\one_{\{X\le1/n\}}.
\]
The values of $y$ lie between $1/n$ and $n$. Lemma~\ref{lem:logshell} and $H_n\asymp\log(en)$ imply
\[
\norm{y}_{1,s}^{s}\le C_s(\log(en))^{1-s}(tH_n)^s\le C_st^sH_n.
\]
For the low part, direct comparison with a constant on its support yields
\[
\norm{z}_{1,s}^{s}\le C_sn^{-s}\min\{1,nt\}^s\le C_st^s.
\]
Finally, using $\norm{X^{1/2}}_{2,q}^{q}=\norm{X}_{1,s}^{s}$ gives \eqref{eq:boundedsupport}.
\end{proof}

\begin{proposition}\label{prop:criticalprofile}
Let $0<q\le1$ and let $f_1,\ldots,f_n\in L_{2,q}(0,1)$. If $f_k^*\le f^*$ for one $f\in L_{2,q}(0,1)$ and every $k$, then
\begin{equation}\label{eq:criticalprofile}
\norm{\left(\sum_{k=1}^n\frac{|f_k|^2}{k}\right)^{1/2}}_{2,q}^{q}
\le C_qH_n\norm{f}_{2,q}^{q}.
\end{equation}
\end{proposition}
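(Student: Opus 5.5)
The plan is to reduce to Lemma~\ref{lem:boundedsupport} by a dyadic decomposition of the common profile $f^*$ into bounded pieces. Each piece has controlled height and controlled support measure. The $q$-subadditivity of Lemma~\ref{lem:qsubadd} will then sum the pieces at the level of $q$-th powers. By homogeneity we may assume $\norm{f}_{2,q}=1$.

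\emph{Step 1: level sets of the profile.} Put $t_j=2^{-j}$ for $j\ge0$ and $M_j=f^*(t_{j+1})$, so that $(M_j)$ increases. Each $f_k$ is split by levels. Define
\[
h_{k,j}=f_k\one_{\{M_{j-1}<|f_k|\le M_j\}}, \qquad h_{k,0}=f_k\one_{\{|f_k|\le M_0\}}.
\]
Any portion of $f_k$ above $\sup_j M_j$ lies at levels where $f^*$ is infinite, so it is null. Because $f_k^*\le f^*$, we have $m\{|f_k|>M_{j-1}\}\le m\{|f|>M_{j-1}\}\le t_j$. Hence $\abs{h_{k,j}}\le M_j$ and $m(\supp h_{k,j})\le t_j$, uniformly in $k$; for $j=0$ the support bound is trivially $1$. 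The pieces $h_{k,j}$ are disjoint in $j$ for fixed $k$, so pointwise
\[
\sum_k\frac{|f_k|^2}{k}=\sum_j X_j, \qquad X_j=\sum_k\frac{|h_{k,j}|^2}{k}.
\]

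\emph{Step 2: summing over $j$.} Using $\sqrt{\sum_j X_j}\le\sum_j X_j^{1/2}$ together with lattice monotonicity and Lemma~\ref{lem:qsubadd} (valid since $p=2>q$), we get
\[
\norm{\Bigl(\sum_j X_j\Bigr)^{1/2}}_{2,q}^q\le C_q\sum_j\norm{X_j^{1/2}}_{2,q}^q.
\]
A subtlety is that Lemma~\ref{lem:qsubadd} is stated for finite families. One can either truncate in $j$ and pass to the limit by Fatou/order continuity, or work directly in $L_{1,q/2}$ with the identity $\norm{X^{1/2}}_{2,q}^q=\norm{X}_{1,q/2}^{q/2}$. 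Lemma~\ref{lem:boundedsupport}, applied for each $j$, gives
\[
\norm{X_j^{1/2}}_{2,q}^q\le C_qM_j^qt_j^{q/2}H_n.
\]

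\emph{Step 3: the dyadic sum.} It remains to show $\sum_j M_j^q t_j^{q/2}\lesssim\norm{f}_{2,q}^q$. Since $f^*$ is decreasing, $f^*\ge M_j$ on $(t_{j+2},t_{j+1})$. Therefore
\[
\int_{t_{j+2}}^{t_{j+1}}(t^{1/2}f^*(t))^q\frac{dt}{t}\gtrsim_q M_j^q t_j^{q/2},
\]
and summing over $j$ bounds the dyadic sum by $C_q\norm{f}_{2,q}^q$. Combining the three steps gives \eqref{eq:criticalprofile}.

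The main obstacle I expect is Step~2. The pieces must be summed with constant $C_q$ independent of the number of levels, and this requires genuine $q$-subadditivity of $\norm{\cdot}_{2,q}^q$ for $q\le1$ rather than a mere quasi-triangle inequality, which would accumulate constants. Lemma~\ref{lem:qsubadd} provides exactly this; the remaining care is only the infinite-sum limit. Finally, if $f^*$ is unbounded near $0$ the decomposition is infinite but still covers $f_k$ almost everywhere. If instead $f^*$ stabilises, the levels eventually become empty and contribute nothing.
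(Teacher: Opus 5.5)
Your argument is correct and follows the paper's proof. Both slice each $f_k$ into dyadic level pieces, combine them via $\bigl(\sum_j X_j\bigr)^{1/2}\le\sum_j X_j^{1/2}$ and $q$-subadditivity, apply Lemma~\ref{lem:boundedsupport} on each level, and sum a dyadic series controlled by $\norm{f}_{2,q}^q$. The only difference is where you slice. You use the $f$-adapted heights $f^*(2^{-j-1})$ with supports of measure $\le 2^{-j}$, whereas the paper uses the fixed heights $2^j$ with supports of measure $\le d_f(2^j)$. Your choice lets you bound the final sum directly from the definition of $\norm{f}_{2,q}$, instead of quoting the distributional discretization $\sum_j 2^{jq}d_f(2^j)^{q/2}\asymp_q\norm{f}_{2,q}^q$.
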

\begin{proof}
For $j\in\mathbb Z$ put
\[
f_{k,j}=f_k\one_{\{2^j<|f_k|\le2^{j+1}\}}.
\]
Then $|f_{k,j}|\le2^{j+1}$ and
\[
m(\supp f_{k,j})\le d_{f_k}(2^j)\le d_f(2^j).
\]
Pointwise Minkowski for the Euclidean norm, followed by $q$-subadditivity in $L_{2,q}$ and Lemma~\ref{lem:boundedsupport}, gives
\[
\norm{\left(\sum_{k=1}^n\frac{|f_k|^2}{k}\right)^{1/2}}_{2,q}^{q}
\le C_q\sum_{j\in\mathbb Z}
\norm{\left(\sum_{k=1}^n\frac{|f_{k,j}|^2}{k}\right)^{1/2}}_{2,q}^{q}
\le C_qH_n\sum_{j\in\mathbb Z}2^{jq}d_f(2^j)^{q/2}.
\]
The standard dyadic discretization of the Lorentz quasi-norm gives
\[
\sum_{j\in\mathbb Z}2^{jq}d_f(2^j)^{q/2}\asymp_q\norm{f}_{2,q}^{q},
\]
which proves \eqref{eq:criticalprofile}.
\end{proof}

\begin{theorem}\label{thm:strictendpoint}
Let $0<q\le1$ and suppose that $f_k^*\le f^*$ for all $k$, where $f\in L_{2,q}(0,1)$. Then
\begin{equation}\label{eq:strictsquare}
\norm{\left(\sum_{k=1}^n\frac{|f_k|^2}{k}\right)^{1/2}}_{2,q}
=o\bigl(H_n^{1/q}\bigr).
\end{equation}
Consequently, for every fixed $r>\max\{2,q\}$,
\begin{equation}\label{eq:strictrademacher}
\left(\Ave_{\varepsilon_k=\pm1}
\norm{\sum_{k=1}^n\frac{\varepsilon_k f_k}{\sqrt{k}}}_{2,q}^{r}\right)^{1/r}
=o\bigl(H_n^{1/q}\bigr).
\end{equation}
\end{theorem}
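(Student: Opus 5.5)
The plan is to turn the big-$O$ bound of Proposition~\ref{prop:criticalprofile} into a little-$o$ bound. We imitate the bounded/tail splitting from Proposition~\ref{prop:profile}, but now the gain for the bounded part comes from Lemma~\ref{lem:boundedsupport}, applied with a small support parameter.

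Fix $\eta>0$. Because $L_{2,q}(0,1)$ is order continuous and $\sum_j 2^{jq}d_f(2^j)^{q/2}\asymp\norm{f}_{2,q}^q<\infty$, we can choose an integer $J$ so large that
\[
\sum_{j\ge J}2^{jq}d_f(2^j)^{q/2}<\eta .
\]
Using the dyadic pieces $f_{k,j}$ from the proof of Proposition~\ref{prop:criticalprofile}, write $f_k=g_k+h_k$ with
\[
g_k=\sum_{j<J}f_{k,j},\qquad h_k=\sum_{j\ge J}f_{k,j}.
\]
Running the proof of Proposition~\ref{prop:criticalprofile} only over $j\ge J$ gives
\[
\norm{\Bigl(\sum_k |h_k|^2/k\Bigr)^{1/2}}_{2,q}^q\le C_qH_n\eta .
\]
For the bounded part we have $|g_k|\le 2^J$, but the supports of the $g_k$ need not be small. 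So we do not apply Lemma~\ref{lem:boundedsupport} to the supports directly. Instead we estimate $X_g=\sum_k |g_k|^2/k$ by a cruder bound that ignores the logarithmic shell structure. The bound $X_g\le 4^J H_n$ pointwise on $(0,1)$ gives
\[
\norm{X_g^{1/2}}_{2,q}\le C2^JH_n^{1/2}.
\]
Since $0<q\le1<2$, we have $H_n^{1/2}=o(H_n^{1/q})$. This is the analogue of "$n^{1/2}=o(n^{1/p})$", and it works because $q<2$.

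Next we combine the two parts. The pointwise Euclidean Minkowski inequality gives
\[
\Bigl(\sum_k|f_k|^2/k\Bigr)^{1/2}\le \Bigl(\sum_k|g_k|^2/k\Bigr)^{1/2}+\Bigl(\sum_k|h_k|^2/k\Bigr)^{1/2}.
\]
By $q$-subadditivity (Lemma~\ref{lem:qsubadd} with $p=2>q$),
\[
\norm{\cdot}_{2,q}^q\le C_q\bigl(4^{Jq/2}H_n^{q/2}+\eta H_n\bigr).
\]
Dividing by $H_n$ and letting $n\to\infty$ gives a $\limsup$ of at most $C_q\eta$. Letting $\eta\downarrow0$ then yields \eqref{eq:strictsquare}.

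For \eqref{eq:strictrademacher}, apply Lemma~\ref{lem:rconcavity} with $p=2$ to the family $f_k/\sqrt{k}$. The condition required there is $r>\max\{2,q\}$, which matches the hypothesis. The lemma bounds the $r$-average of Rademacher sums by $C\norm{S_2(f_1/\sqrt1,\ldots,f_n/\sqrt n)}_{2,q}$, and this is exactly the left side of \eqref{eq:strictsquare}.

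The main point to check is that the crude bounded-part estimate really is sufficient. The danger would be that the constant $C_q$ in Proposition~\ref{prop:criticalprofile} cannot be localized to $j\ge J$. It can, because that proof is an $\ell_q$-sum over dyadic levels, and the restricted level sum is bounded by $\eta$ through the dyadic discretization. I also need to confirm that $J$ can be chosen so that this restricted sum is small; this follows since the full series converges. The identity $\sum_j f_{k,j}=f_k$ holds a.e. because the dyadic sets $\{2^j<|f_k|\le 2^{j+1}\}$ partition $\{f_k\neq0\}$.
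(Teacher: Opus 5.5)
Your proof is correct and follows the paper's argument. Like the paper, you truncate at a height ($2^J$ for you, $M$ in the paper), bound the bounded part crudely by $2^JH_n^{1/2}=o(H_n^{1/q})$ using $q<2$, control the tail through the dyadic critical-profile estimate, and finish with Lemma~\ref{lem:rconcavity}. The only difference is cosmetic: the paper applies Proposition~\ref{prop:criticalprofile} as a black box via the domination $b_k^*\le (f^{(M)})^*$ and order continuity, whereas you rerun its dyadic proof over the levels $j\ge J$ and use the tail of the convergent series $\sum_j 2^{jq}d_f(2^j)^{q/2}$.
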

\begin{proof}
Fix $M>0$ and decompose
\[
a_k=f_k\one_{\{|f_k|\le M\}},\qquad b_k=f_k\one_{\{|f_k|>M\}}.
\]
The bounded part satisfies pointwise
\[
\left(\sum_{k=1}^n\frac{|a_k|^2}{k}\right)^{1/2}\le MH_n^{1/2}\one.
\]
Hence, since $q\le1$,
\begin{equation}\label{eq:boundedendpoint}
H_n^{-1/q}\norm{\left(\sum_{k=1}^n\frac{|a_k|^2}{k}\right)^{1/2}}_{2,q}
\le C_qM H_n^{1/2-1/q}\longrightarrow0.
\end{equation}
Let
\[
f^{(M)}=f^*\one_{\{f^*>M\}}.
\]
Distribution domination gives $b_k^*\le(f^{(M)})^*$: below level $M$ both distribution functions are controlled by $d_f(M)$, and above level $M$ the assertion is inherited from $f_k^*\le f^*$. Proposition~\ref{prop:criticalprofile} therefore gives
\begin{equation}\label{eq:tailendpoint}
H_n^{-1/q}\norm{\left(\sum_{k=1}^n\frac{|b_k|^2}{k}\right)^{1/2}}_{2,q}
\le C_q\norm{f^{(M)}}_{2,q}.
\end{equation}
Because $q<\infty$, $L_{2,q}$ is order continuous and the right-hand side tends to zero as $M\to\infty$. Since the full square function is bounded by the sum of the two square functions, \eqref{eq:boundedendpoint}--\eqref{eq:tailendpoint} prove \eqref{eq:strictsquare}. Formula \eqref{eq:strictrademacher} follows from Lemma~\ref{lem:rconcavity}, applied with $p=2$ and $g_k=f_k/\sqrt{k}$.
\end{proof}

\begin{remark}
Proposition~\ref{prop:criticalprofile} alone has the same $H_n^{1/q}$ scale as the source norm. The truncation argument in Theorem~\ref{thm:strictendpoint} creates strictness: bounded pieces grow only like $H_n^{1/2}$, whereas tails are uniformly small in $L_{2,q}$. This is where the restriction $q<2$ enters, and it is more than sufficient for the present range $0<q\le1$.
\end{remark}

\begin{theorem}\label{thm:endpoint}
Let $0<q\le1$. There is no linear isomorphic embedding
\[
\ell_{2,q}\longrightarrow L_{2,q}(0,1).
\]
\end{theorem}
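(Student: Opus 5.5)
We argue by contradiction, following the template of the subcritical proof in Section~5 but with the weights $k^{-1/2}$. Suppose $T:\ell_{2,q}\to L_{2,q}(0,1)$ is an isomorphic embedding and put $f_k=Te_k$.

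\textbf{Step 1 (splitting).} Apply Principle~\ref{pr:splitting} and the small perturbation principle. After passing to a subsequence we may write $y_k=u_k+v_k$, where:
\begin{itemize}
\item $(y_k)$ is equivalent to the symmetric basis of $\ell_{2,q}$;
\item $u_k^*\le u^*$ for one fixed $u\in L_{2,q}(0,1)$;
\item the $v_k$ are pairwise disjoint.
\end{itemize}
Since a subsequence of a symmetric basis spans the same space, the source lower bound still holds: for every choice of signs,
\[
\norm{\sum_{k=1}^n\frac{\varepsilon_k}{\sqrt k}\,y_k}_{2,q}\ge c\,H_n^{1/q}.
\]
Hence the corresponding $L_r$-average $Y_n$ over signs satisfies $Y_n\gtrsim H_n^{1/q}$.

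\textbf{Step 2 (removing the common profile).} Theorem~\ref{thm:strictendpoint}, applied to $(u_k)$ with $f=u$, gives
\[
U_n:=\left(\Ave_\varepsilon\norm{\sum_{k=1}^n\frac{\varepsilon_ku_k}{\sqrt k}}_{2,q}^r\right)^{1/r}=o\bigl(H_n^{1/q}\bigr).
\]
Apply the reverse estimate \eqref{eq:reverse}, via an Aoki--Rolewicz power on the product of the sign space and $L_{2,q}$, exactly as in the derivation of \eqref{eq:Vn}. Here $q$-subadditivity, Lemma~\ref{lem:qsubadd} with $p=2>q$, may be used directly on each sign pattern before averaging. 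This yields
\[
V_n:=\left(\Ave_\varepsilon\norm{\sum_{k=1}^n\frac{\varepsilon_kv_k}{\sqrt k}}_{2,q}^r\right)^{1/r}\gtrsim H_n^{1/q}.
\]
This step needs $q<2$, which holds here since $q\le 1$, and it also needs $p\ne q$.

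\textbf{Step 3 (the disjoint part).} Apply Lemma~\ref{lem:disjointdichotomy} to $(v_k)$.

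\emph{Norm-null alternative.} Pass to a subsequence with $\sum_k\norm{v_k}_{2,q}^q<\infty$. By Lemma~\ref{lem:qsubadd},
\[
\norm{\sum_k\frac{\varepsilon_kv_k}{\sqrt k}}_{2,q}^q\le C\sum_k\norm{v_k}_{2,q}^q,
\]
which is bounded. This contradicts $V_n\gtrsim H_n^{1/q}\to\infty$.

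\emph{$\ell_q$ alternative.} Here $(v_k)$ is equivalent to the unit vector basis of $\ell_q$, so
\[
V_n\asymp\Bigl(\sum_{k=1}^n k^{-q/2}\Bigr)^{1/q}\asymp n^{1/q-1/2}.
\]
This is far larger than $H_n^{1/q}$, so the lower bound alone gives no contradiction. Instead we obtain an upper bound on $V_n$ from the $\ell_{2,q}$ side. Following Case~2 of Theorem~\ref{thm:above2}, restrict to the tail set $A^{(N)}$. On that set the common-profile part has quasi-norm at most $\eta$ by Lemma~\ref{lem:commonprofile6}(i), and $q$-subadditivity controls it. This gives, for coefficients supported in $\{k\ge N\}$,
\[
\norm{\sum_k a_ky_k}_{2,q}\ge c_1\norm{(a_k)}_{\ell_q}.
\]
Now take $a_k=1$ on $n$ consecutive indices. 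Equivalence with the $\ell_{2,q}$ basis gives $n^{1/2}\gtrsim n^{1/q}$, which is impossible since $q\le1<2$.

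In fact Case~2 of Theorem~\ref{thm:above2} never used $p>2$; it only used $q<p$. So the $\ell_q$ branch is handled verbatim, and the weighted estimates of Steps~1--2 are needed only to rule out the common-profile branch.

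\textbf{Reorganization and main obstacle.} It is therefore cleaner to branch first on Lemma~\ref{lem:disjointdichotomy}. If $(v_k)$ is seminormalized and $\ell_q$, Case~2 of Theorem~\ref{thm:above2} applies with $p=2$. If $\norm{v_k}\to0$, the small perturbation principle makes $(u_k)$ itself equivalent to the $\ell_{2,q}$ basis. The source lower bound $\gtrsim H_n^{1/q}$ then holds for every sign choice and hence for the $r$-average, which contradicts \eqref{eq:strictrademacher}.

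The main obstacle is the step from Theorem~\ref{thm:strictendpoint} to \eqref{eq:strictrademacher} via Lemma~\ref{lem:rconcavity} with weighted functions. We must check that $g_k=f_k/\sqrt k$ has square function exactly $\bigl(\sum_k |f_k|^2/k\bigr)^{1/2}$, so that the cited $r$-concavity applies with $r>\max\{2,q\}$ uniformly in $n$. Granting this, the endpoint follows.
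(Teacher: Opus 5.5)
Your proof is correct. In your reorganized version the norm-null branch is exactly the paper's: the small perturbation principle makes $(u_k)$ equivalent to the $\ell_{2,q}$ basis, so every signed weighted sum is $\gtrsim H_n^{1/q}$, which contradicts \eqref{eq:strictrademacher}.

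You genuinely diverge from the paper in the seminormalized disjoint branch.

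\textbf{The paper's route.} It stays inside the weighted Rademacher framework. From $v_k=y_k-u_k$ and the quasi-triangle inequality it gets the upper bound $V_n\le C_q(Y_n+U_n)=O(H_n^{1/q})$. Equivalence with the $\ell_q$ basis gives $V_n\asymp n^{1/q-1/2}$, and the two clash because $q<2$. This is precisely the ``upper bound on $V_n$'' you say you want but then do not use. Your Step~2 lower bound $V_n\gtrsim H_n^{1/q}$ is indeed useless there, but running the same inequality in the other direction finishes in one line.

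\textbf{Your route.} You transplant Case~2 of Theorem~\ref{thm:above2}. You are right that it uses only $0<q\le1$ and $q<p$, through Lemma~\ref{lem:qsubadd}, Lemma~\ref{lem:commonprofile6}(i) and lattice monotonicity. So it applies verbatim at $p=2$.

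\textbf{What each buys.} Your route gives a cleaner division of labour. One unweighted argument handles the disjoint branch for every $p>q$ with $q\le1$. The logarithmic weights are then needed only to rule out the pure common-profile branch. In that organization, your Step~2 and your original norm-null argument become superfluous. The paper's route is shorter once $Y_n,U_n,V_n$ are in place. In that branch it needs only $U_n=O(H_n^{1/q})$, not the little-$o$ estimate.

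\textbf{Your ``main obstacle''.} It is not an obstacle.
\begin{itemize}
\item $S_2(f_1/\sqrt{1},\ldots,f_n/\sqrt{n})=(\sum_k|f_k|^2/k)^{1/2}$ is an identity.
\item Lemma~\ref{lem:rconcavity} has constants independent of $n$.
\item \eqref{eq:strictrademacher} is in any case part of the statement of Theorem~\ref{thm:strictendpoint}, which you may simply cite.
\end{itemize}
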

\begin{proof}
Assume that $T:\ell_{2,q}\to L_{2,q}(0,1)$ is an isomorphic embedding and put $x_k=Te_k$. Apply Principle~\ref{pr:splitting}; after a subsequence and a small perturbation, we may work with
\[
y_k=u_k+v_k\sim e_k,\qquad u_k^*\le u^*,
\]
where the $v_k$ are disjoint. Fix $r>\max\{2,q\}$ and set
\[
Y_n=\left(\Ave_\varepsilon\norm{\sum_{k=1}^n\frac{\varepsilon_ky_k}{\sqrt{k}}}_{2,q}^{r}\right)^{1/r},
\quad
U_n=\left(\Ave_\varepsilon\norm{\sum_{k=1}^n\frac{\varepsilon_ku_k}{\sqrt{k}}}_{2,q}^{r}\right)^{1/r},
\]
\[
V_n=\left(\Ave_\varepsilon\norm{\sum_{k=1}^n\frac{\varepsilon_kv_k}{\sqrt{k}}}_{2,q}^{r}\right)^{1/r}.
\]
The equivalence $(y_k)\sim(e_k)$ and the critical coefficient estimate give
\begin{equation}\label{eq:endpointYn}
Y_n\asymp_q H_n^{1/q}.
\end{equation}
Theorem~\ref{thm:strictendpoint} gives
\begin{equation}\label{eq:endpointUn}
U_n=o(H_n^{1/q}).
\end{equation}

If $\norm{v_k}_{2,q}\to0$, pass to a further subsequence so rapidly that $(v_k)$ is a small perturbation. Then $(u_k)\sim(y_k)\sim(e_k)$, and hence every choice of signs satisfies
\[
\norm{\sum_{k=1}^n\frac{\varepsilon_ku_k}{\sqrt{k}}}_{2,q}\ge cH_n^{1/q},
\]
contradicting \eqref{eq:endpointUn}.

It remains to consider the seminormalized disjoint branch. By Lemma~\ref{lem:disjointdichotomy}, after passage $(v_k)$ is equivalent to the unit vector basis of $\ell_q$. Therefore signs do not affect its asymptotic size and
\begin{equation}\label{eq:endpointVlower}
V_n\asymp_q\left(\sum_{k=1}^n k^{-q/2}\right)^{1/q}
\asymp_q n^{1/q-1/2}.
\end{equation}
On the other hand, $v_k=y_k-u_k$. The power triangle inequality in $L_{2,q}$, followed by the ordinary triangle inequality in the finite $L_r$ average, gives from \eqref{eq:endpointYn} and \eqref{eq:endpointUn}
\begin{equation}\label{eq:endpointVupper}
V_n\le C_q(Y_n+U_n)=O(H_n^{1/q}).
\end{equation}
Since $q<2$,
\[
\frac{n^{1/q-1/2}}{H_n^{1/q}}\longrightarrow\infty,
\]
so \eqref{eq:endpointVlower} and \eqref{eq:endpointVupper} are incompatible. Both structural alternatives lead to a contradiction, and the proof is complete.
\end{proof}

\begin{corollary}\label{cor:endpointinfty}
For $0<q\le1$, there is no isomorphic embedding
\[
L_{2,q}(0,\infty)\longrightarrow L_{2,q}(0,1).
\]
\end{corollary}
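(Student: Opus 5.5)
The plan is to deduce the corollary from Theorem~\ref{thm:endpoint}, exactly as Corollaries for $p<2$ and Corollary~\ref{cor:above2infty} were deduced from the corresponding nonembedding theorems. Fix $0<q\le1$. The first step is to exhibit $\ell_{2,q}$ as an isomorphic subspace of $L_{2,q}(0,\infty)$ via the disjoint-indicator map
\[
J:(a_k)\longmapsto\sum_{k=1}^\infty a_k\one_{(k-1,k]}.
\]
For a finitely supported sequence, the decreasing rearrangement of $J(a)$ is the step function equal to $a_k^*$ on $(k-1,k]$. Hence
\[
\norm{J(a)}_{2,q}^q=\sum_k (a_k^*)^q\int_{k-1}^k t^{q/2-1}\,dt=\frac{2}{q}\sum_k(a_k^*)^q\bigl[k^{q/2}-(k-1)^{q/2}\bigr],
\]
which by \eqref{eq:sequence} equals $\norm{a}_{\ell_{2,q}}^q$ up to the harmless normalization factor. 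Density of finitely supported sequences in $\ell_{2,q}$ (since $q<\infty$) extends $J$ to an isomorphic (indeed, up to normalization, isometric) embedding. Note that this computation uses no local convexity, so the range $q\le1$ causes no trouble.

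The second step is composition. If $S:L_{2,q}(0,\infty)\to L_{2,q}(0,1)$ were an isomorphic embedding with constant $C(S)$, then $S\circ J:\ell_{2,q}\to L_{2,q}(0,1)$ would be linear and would satisfy two-sided estimates with constant $C(S)C(J)$. It would therefore be an isomorphic embedding, which contradicts Theorem~\ref{thm:endpoint}. This completes the argument.

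No step is a genuine obstacle. The only point needing care is the rearrangement identity for $J(a)$, which is immediate because the indicators have disjoint supports of equal measure one. The "in particular, not isomorphic" type conclusion also follows at once, since an isomorphism is an embedding.
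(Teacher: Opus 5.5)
Your proposal is correct and follows the paper's argument: the disjoint unit-interval indicators span a copy of $\ell_{2,q}$ in $L_{2,q}(0,\infty)$ (isometric up to normalization), so composing with a hypothetical embedding $S$ would produce an embedding of $\ell_{2,q}$ into $L_{2,q}(0,1)$ forbidden by Theorem~\ref{thm:endpoint}. Your explicit rearrangement computation just fills in the detail that the paper summarizes as ``naturally equivalent to $\ell_{2,q}$''.
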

\begin{proof}
The span of the unit-interval indicators $\one_{(k-1,k)}$, $k\ge1$, in $L_{2,q}(0,\infty)$ is naturally equivalent to $\ell_{2,q}$. An embedding as displayed would therefore restrict to an embedding prohibited by Theorem~\ref{thm:endpoint}.
\end{proof}

\begin{remark}
The subcritical argument in Section~5, Theorem~\ref{thm:above2}, and Theorem~\ref{thm:endpoint} together prove the full statement of Theorem~\ref{thm:main}. In particular, they complete the parameter strip $0<q\le1$: for every $p>0$ with $p\neq q$,
\[
\ell_{p,q}\hooknot L_{p,q}(0,1),
\qquad
L_{p,q}(0,\infty)\hooknot L_{p,q}(0,1).
\]
For $q=1$ the endpoint conclusion recovers the known Banach case in \cite{SadovskayaSukochev}; the genuinely new endpoint content is $p=2$, $0<q<1$. The logarithmic weights $k^{-1/2}$ explain why the ordinary unweighted common-profile estimate is inconclusive at the quadratic threshold.
\end{remark}

Combining the results established above with the corresponding results available in the literature, we obtain the following corollary.

\begin{corollary}
Let $0<p, q<\infty$. Then
$
\ell_{ p, q} \hookrightarrow L_{p, q}(0,1)
$
 if and only if $p=q$.
\end{corollary}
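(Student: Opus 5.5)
The plan is to prove the two directions separately. The equivalence follows by combining Theorem~\ref{thm:main} with the known locally convex results, so the main work is checking that the parameter regions cover everything.

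\emph{Sufficiency ($p=q$).} By \eqref{eq:sequence}, $\ell_{p,p}=\ell_p$ with an equivalent quasi-norm, and $L_{p,p}(0,1)=L_p(0,1)$. Take pairwise disjoint intervals $I_k\subset(0,1)$, for instance $I_k=(2^{-k},2^{-k+1}]$, and define
\[
Ta=\sum_k a_k\,m(I_k)^{-1/p}\one_{I_k}.
\]
The $p$-th power of the $L_p$ quasi-norm is additive over disjoint supports, so
\[
\norm{Ta}_p^p=\sum_k|a_k|^p .
\]
Hence $T$ is an isometric embedding of $\ell_p$ into $L_p(0,1)$. This gives $\ell_{p,p}\hookrightarrow L_{p,p}(0,1)$ for every $0<p<\infty$, including the nonlocally convex range $p<1$.

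\emph{Necessity ($p\neq q$ implies no embedding).} I split the set $\{(p,q):0<p,q<\infty,\ p\neq q\}$ into two regions.

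\begin{enumerate}[label=(\alph*)]
\item If $p<2$ or $q\le1$, Theorem~\ref{thm:main} applies directly. It was proved via the subcritical argument of Section~5, Theorem~\ref{thm:above2}, and Theorem~\ref{thm:endpoint}.
\item Otherwise $p\ge2$ and $q>1$. In particular $1<p<\infty$ and $1\le q<\infty$, which is the locally convex region of the literature.
\begin{itemize}
\item For $p\neq2$, the nonembedding is the result of Kuryakov and Sukochev~\cite{KuryakovSukochev}.
\item For $p=2$ with $q>2$, \cite{KuryakovSukochev} also applies, since the only case they left open was $p=2$, $1\le q<2$.
\item For $p=2$ with $1<q<2$, the nonembedding is the result of Sadovskaya and Sukochev~\cite{SadovskayaSukochev}.
\end{itemize}
\end{enumerate}

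The main obstacle is purely bookkeeping: verifying that the cited Banach-range theorems use the same notion of isomorphic embedding and cover exactly the complement $\{p\ge2,\ q>1,\ p\neq q\}$. The normalization of the quasi-norms does not matter, since equivalent quasi-norms do not affect embeddability. The case $p=2$, $q=2$ is excluded because $p=q$ there. Once this case check is done, the two directions together give the stated equivalence.
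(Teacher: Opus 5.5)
Your proposal is correct and takes the same route as the paper. Sufficiency comes from disjoint normalized indicators, as in the paper's remark after Theorem~\ref{thm:main}. Necessity combines Theorem~\ref{thm:main} on $\{p<2\}\cup\{q\le1\}$ with Kuryakov--Sukochev and Sadovskaya--Sukochev on the remaining locally convex region $\{p\ge2,\ q>1,\ p\neq q\}$; your explicit case split just spells out the paper's one-line ``combining with the literature'' argument.
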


\appendix
\section{Structural checks in the quasi-Banach range}
We collect here the standard Lorentz-lattice facts needed to justify the two structural ingredients used above: the mixed-norm estimate in Lemma~2.1 and the subsequence splitting principle in Section~4. The point is to verify that both remain available throughout the full range $0<p,q<\infty$, including the nonlocally convex cases.

\begin{proposition}
	Let $0<p,q<\infty$. If $0<\alpha<\min\{p,q,1\}$, then
	\[
	\bigl\|\operatorname{sgn}(f)|f|^{\alpha}\bigr\|_{p/\alpha,q/\alpha}
	=\|f\|_{p,q}^{\alpha}.
	\]
	Consequently, $L_{p,q}$ has nontrivial lattice convexity. Moreover, for every $r>\max\{p,q\}$,
	\[
	\left(\sum_j\|f_j\|_{p,q}^{\,r}\right)^{1/r}
	\le C_{p,q,r}
	\left\|\left(\sum_j|f_j|^r\right)^{1/r}\right\|_{p,q},
	\tag{A.1}
	\]
	so $L_{p,q}$ satisfies the corresponding lower $r$-estimate on disjoint families. Finally, the quasi-norm of $L_{p,q}(0,1)$ is order continuous and hence absolutely continuous; in particular, bounded simple functions are dense in $L_{p,q}(0,1)$.
\end{proposition}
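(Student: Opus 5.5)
The proposition has four parts, and I would take them in order.

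\emph{The power identity.} Set $g=\operatorname{sgn}(f)|f|^{\alpha}$. Since $t\mapsto t^\alpha$ is increasing on $[0,\infty)$, we have $d_g(\lambda)=d_f(\lambda^{1/\alpha})$ and hence $g^*=(f^*)^{\alpha}$. Substituting into \eqref{eq:lorentz} with the indices $(p/\alpha,q/\alpha)$ gives
\[
\|g\|_{p/\alpha,q/\alpha}^{q/\alpha}=\int_0^\infty\bigl[t^{\alpha/p}f^*(t)^{\alpha}\bigr]^{q/\alpha}\frac{dt}{t}=\|f\|_{p,q}^{q}.
\]
Taking the $\alpha/q$ power yields the identity. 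Up to the normalization factors we agreed to omit, this is exact.

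\emph{Lattice convexity.} Choose $\alpha<\min\{p,q,1\}$ small enough that $p/\alpha>1$ and $q/\alpha\ge1$. Then $L_{p/\alpha,q/\alpha}$ is normable (\cite[Chapter~2]{BennettSharpley}); this is the same fact used in Lemma~\ref{lem:qsubadd}. Fix a norm equivalent to its quasi-norm. Apply the identity to $\bigl(\sum_j|f_j|^\alpha\bigr)^{1/\alpha}$ and use the triangle inequality in that norm:
\[
\Bigl\|\Bigl(\sum_j|f_j|^\alpha\Bigr)^{1/\alpha}\Bigr\|_{p,q}^{\alpha}
=\Bigl\|\sum_j|f_j|^\alpha\Bigr\|_{p/\alpha,q/\alpha}\le C\sum_j\|f_j\|_{p,q}^\alpha .
\]
This is $\alpha$-convexity of $L_{p,q}$.

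\emph{The lower $r$-estimate (A.1).} Here I would not rely on duality, because duality is unavailable in the nonlocally convex range. Instead I would use the same power trick with the exponent $r$. Write $\|f\|_{p,q}^r=\||f|^r\|_{p/r,q/r}$, where now $p/r<1$ and $q/r<1$. The target is the reverse inequality
\[
\sum_j\||f_j|^r\|_{p/r,q/r}\le C\Bigl\|\sum_j|f_j|^r\Bigr\|_{p/r,q/r}.
\]
This is a $1$-concavity statement for $L_{s,t}$ with $s,t<1$. I expect it to be the main obstacle. My first attempt would be the standard fact that $L_{s,t}$ is $\max\{s,t\}$-concave, hence $1$-concave since $\max\{s,t\}<1$ (\cite[Chapter~2]{BennettSharpley}). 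If that fact is not available in the generality needed, I would prove it directly in two steps:
\begin{enumerate}[label=(\roman*)]
\item For disjointly supported functions, the inequality follows from the formula $\|h\|_{s,t}^{t}\asymp\sum_k 2^{kt}d_h(2^k)^{t/s}$ together with superadditivity of $x\mapsto x^{t/s}$, valid because $t/s\ge1$ (the case $t/s<1$ needs a separate check).
\item For general families, reduce to the disjoint case using the same anti-majorization principle \cite[Theorem~1.1(b)]{CadilhacSukochevZanin} already invoked in Lemma~\ref{lem:antimaj}: disjointification satisfies $\bigoplus_j|f_j|^r\prec\sum_j|f_j|^r$, and $\|\cdot\|_{p/r,q/r}$ is anti-monotone under majorization because $p/r<1$.
\end{enumerate}
Combining the two steps gives (A.1), and restricting (A.1) to disjoint families gives the lower $r$-estimate.

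\emph{Order continuity and density.} Let $0\le f_n\downarrow0$ a.e.\ with $f_1\in L_{p,q}$. Then $f_n^*\downarrow0$ pointwise at continuity points, and dominated convergence in the integral \eqref{eq:lorentz}, with dominating function $[t^{1/p}f_1^*]^q/t$ (integrable because $q<\infty$), gives $\|f_n\|_{p,q}\to0$. Absolute continuity of the quasi-norm follows by applying this to $f\one_{A_n}$ with $m(A_n)\to0$. For density, first truncate $f$ at a large height $M$; by order continuity the remainder is small. Then approximate the bounded truncation from below by simple functions and apply order continuity once more. The quasi-triangle inequality \eqref{eq:aoki} combines the two errors.
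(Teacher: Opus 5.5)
The power identity, the $\alpha$-convexity argument and the order-continuity part are fine. For absolute continuity, use $(f\one_{A_n})^*\le f^*\one_{(0,m(A_n))}$ and dominated convergence, since $f\one_{A_n}$ need not decrease.

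The problem is (A.1), which the paper simply cites as $r$-concavity of $L_{p,q}$ for $r>\max\{p,q\}$. The fact you propose to cite instead, that $L_{s,t}$ is $\max\{s,t\}$-concave, is false when $t<s$ (i.e.\ $q<p$). By your own power identity it would make $L_{1,\tau}$, $\tau=t/s<1$, $1$-concave. Cut $(0,1)$ into $n$ intervals of length $1/n$ and let $g_1,\dots,g_n$ be the cyclic shifts of the step function taking the values $1,\frac12,\dots,\frac1n$ on consecutive intervals. Then $\sum_j g_j\equiv\sum_{k=1}^n\frac1k$ is constant, so $\|\sum_j g_j\|_{1,\tau}\asymp\log n$, while $\sum_j\|g_j\|_{1,\tau}\asymp(\log n)^{1/\tau}$. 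What holds in general is $\rho$-concavity for $\rho>\max\{s,t\}$. That suffices here because $\max\{s,t\}=\max\{p,q\}/r<1$, and it is just the paper's citation rewritten.

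Your self-contained fallback has a genuine error in step (ii): it runs the majorization in the wrong direction. From $\bigoplus_j g_j\prec\sum_j g_j$, anti-monotonicity gives $\|\sum_j g_j\|_{s,t}\le C\|\bigoplus_j g_j\|_{s,t}$. That is an \emph{upper} bound on the sum, exactly what Lemma~\ref{lem:antimaj} uses. Reducing (A.1) to your step (i) instead needs $\|\bigoplus_j g_j\|_{s,t}\lesssim\|\sum_j g_j\|_{s,t}$, and this is false: for $g_j=\one_{(0,1)}$, $1\le j\le n$, the two sides are $n^{1/s}$ and $n$ up to the same constant, with $s<1$. For first index $s<1$ the flattest arrangement, the disjoint sum, has the largest norm, so you must compare with the most concentrated arrangement.

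Since $\int_0^x(\sum_j g_j)^*\le\sum_j\int_0^x g_j^*$, you have $\sum_j g_j\prec\sum_j g_j^*$. Anti-monotonicity then yields $\|\sum_j g_j^*\|_{s,t}\le C\|\sum_j g_j\|_{s,t}$. Because $\sum_j g_j^*$ is already decreasing, its $L_{s,t}$ quasi-norm is the $L^t(du/u)$ quasi-norm of $\sum_j u^{1/s}g_j^*(u)$. The reverse Minkowski inequality for $t\le1$ then gives $\sum_j\|g_j\|_{s,t}\le\|\sum_j g_j^*\|_{s,t}$. This proves (A.1) for every $r>\max\{p,q\}$ and makes step (i), with its unresolved case $t<s$, unnecessary.
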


\begin{proof}
	Since $(|f|^{\alpha})^*=(f^*)^{\alpha}$, the first identity follows directly from the definition of the Lorentz quasi-norm. Because $p/\alpha>1$ and $q/\alpha>1$, the usual convexification argument yields nontrivial lattice convexity. Estimate~(A.1) is the standard $r$-concavity inequality for Lorentz lattices when $r>\max\{p,q\}$, and its restriction to disjoint families gives the required lower $r$-estimate. These facts are classical; see Bennett and Sharpley~\cite[Chapter~2]{BennettSharpley}. For $q<\infty$, order continuity of the Lorentz quasi-norm is standard as well; on $(0,1)$ it yields absolute continuity, density of bounded simple functions, and separability~\cite[Chapter~2]{BennettSharpley}.
\end{proof}

\begin{remark}
	The role of Proposition~A.1 is to verify the hypotheses behind the quasi-Banach splitting argument, rather than to introduce new structural properties of Lorentz spaces. After an equivalent lattice renorming, the relevant convexity and lower-estimate constants may be normalized as required in Randrianantoanina's subsequence splitting theorem~\cite[Theorems~3.1 and~3.9]{Randrianantoanina2002}. Hence the splitting principle used in Section~4 applies for every $0<p,q<\infty$. In addition, (A.1) is precisely the norm-interchange estimate used in Lemma~2.1. Thus neither of these two ingredients carries a hidden local-convexity assumption such as $p\ge1$ or $q\ge1$.
\end{remark}

\section*{Declaration of competing interest}
The author declares no competing interest.

\section*{Data availability}
No data were used for the research described in this article.

\section*{
Acknowledgments}

Thanks to all the members of the Functional Analysis Research Team at the School of Mathematics and Statistics, Anqing Normal University, for their valuable discussions and corrections
regarding the challenges and errors encountered in this article.

\end{document}